\newtheorem{thm}{Theorem}[section]
\newtheorem{lem}{Lemma}[section]
\newtheorem{prop}{Proposition}[section]
\newtheorem{conj}{Conjecture}[section]
\newtheorem{prob}{Problem}[section]
\newtheorem{coro}{Corollary}[section]
\theoremstyle{definition}
\newtheorem{exm}{Example}[section]
\newtheorem{rem}{Remark}[section]
\numberwithin{equation}{section}
\title{Operations of graphs and unimodality of independence polynomials
\thanks{Supported partially by the National Science Foundation of China (Nos.11071030,11201191), Natural Science Foundation of Jiangsu Higher
Education Institutions (No.12KJB110005) and PAPD of Jiangsu Higher
Education Institutions.
\newline\hspace*{5mm}
   {\it Email addresses:}
    bxzhu@jsnu.edu.cn (B.-X. Zhu)}}
\author{Bao-Xuan Zhu}
\date{\footnotesize School of Mathematical Sciences,
          Jiangsu Normal University,
         Xuzhou 221116, P. R. China}
\begin{document}
\maketitle
\begin{abstract}
Given two graphs $G$ and $H$, assume that
$\mathscr{C}=\{C_1,C_2,\ldots, C_q\}$ is a clique cover of $G$ and
$U$ is a subset of $V(H)$. We introduce a new graph operation called
the clique cover product, denoted by $G^{\mathscr{C}}\star H^U$, as
follows: for each clique $C_i\in \mathscr{C}$, add a copy of the
graph $H$ and join every vertex of $C_i$ to every vertex of $U$. We
prove that the independence polynomial of $G^{\mathscr{C}}\star H^U$
$$I(G^{\mathscr{C}}\star
H^U;x)=I^q(H;x)I(G;\frac{xI(H-U;x)}{I(H;x)}),$$ which generalizes
some known results on independence polynomials of corona and rooted
products of graphs obtained by Gutman and Rosenfeld, respectively.
Based on this formula, we show that the clique cover product of some
special graphs preserves symmetry, unimodality, log-concavity or
reality of zeros of independence polynomials. As applications we
derive several known facts in a unified manner and solve some
unimodality conjectures and problems.
\bigskip\\
{\sl MSC:}\quad 05C69; 05A20
\\[7pt]
{\sl Keywords:}\quad Independence polynomials; Unimodality;
Log-concavity; Real zeros; Symmetry
\end{abstract}

\section{Introduction}
\hspace*{\parindent}
For the graph theoretical terms used but not defined, we follow
Bondy and Murty~\cite{BM76}.

Let $G=(V(G),E(G))$ be a finite and simple graph. By $G-U$ we mean
the induced subgraph $G[V-U]$, if $U\subseteq V(G)$. We also denote
by $G-e$ the subgraph of $G$, obtained by deleting an edge $e$ of
$E(G)$. For $v\in V(G)$, denote $N(v)=\{w: \text{$w\in V(G)$ and
$vw\in E(G)$}\}$ and $N[v]=N(v)\cup \{v\}$. The {\it join} of two
disjoint graphs $G_1$ and $G_2$, denoted by $G_1+G_2$,  with
$E(G_1)\cup E(G_2)\cup \left\{uv:u\in V(G_1),v\in V(G_2)\right\}$ as
the edge set and $V(G_1)\cup V(G_2)$ as the vertex set. An {\it
independent set} in a graph $G$ is a set of pairwise non-adjacent
vertices. A {\it maximum independent set} in $G$ is a largest
independent set and its size is denoted $\alpha(G)$. Let $i_k(G)$
denote the number of independent sets of cardinality $k$ in $G$.
Then its generating function
$$I(G;x)=\sum\limits_{k=0}^{\alpha(G)}i_k(G)x^k,\quad i_0(G)=1$$
is called the {\it independence polynomial} of $G$ (Gutman and
Harary~\cite{GH83}). In general, it is an NP-complete problem to
determine the independence polynomial, since evaluating $\alpha(G)$
is an NP-complete problem~(\cite{GJ79}). Thus, a classical question
is to compute the independence polynomial of a graph. It is a good
way to obtain the independence polynomial of a graph in term of its
subgraphs. One can easily deduce (e.g., Gutman and
Harary~\cite{GH83}) that
\begin{eqnarray*}
I(G_1\cup G_2;x)=I(G_1;x)I(G_2;x),\quad I(G_1+
G_2;x)=I(G_1;x)+I(G_2;x)-1.
\end{eqnarray*}

As we know, to study properties of graphs, there are some useful and
important operations of graphs in the graph theory. Motivated by the
above mentioned examples, one may further ask which operation of
graphs is good to compute the independence polynomial. Recall that a
{\it clique cover} of a graph $G$ is a spanning graph of $G$, each
component of which is a clique. We here define an operation of
graphs called the {\it clique cover product}. Given two graphs $G$
and $H$, assume that $\mathscr{C}=\{C_1,C_2,\ldots, C_q\}$ is a
clique cover of $G$ and $U$ is a subset of $V(H)$. Construct a new
graph from $G$, as follows: for each clique $C_i\in \mathscr{C}$,
add a copy of the graph $H$ and join every vertex of $C_i$ to every
vertex of $U$. Let $G^{\mathscr{C}}\star H^U$ denote the new graph.
In fact, the clique cover product of graphs is a common
generalization of some known operations of graphs. For instance: If
each clique $C_i$ of the clique cover $\mathscr{C}$ is a vertex,
then $G^{V(G)}\star H^{V(H)}$ is the corona of $G$ and $H$ defined
by Frucht and Harary~\cite{FH70}, denoted by $G\circ H$. If each
clique $C_i$ of the clique cover $\mathscr{C}$ is a vertex and a
vertex $v$ is the root of $H$, then $G^{V(G)}\star (H-v)^{N(v)}$ is
the rooted product of $G$ and $H$ introduced by Godsil and
MacKay~\cite{GM78}, denoted by $G\overline{\circ} H$. If we take
$H=2K_1$ and $U=V(2K_1)$, then $G^{\mathscr{C}}\star H^U$ is the
graph $\mathscr{C}(G)$ obtained by Stevanovi\'c~\cite{Ste98} using
the clique cover construction. Note that $G^{\mathscr{C}}\star
H^{V(H)}$ also contains the compound graph $G^{\Delta}(H)$ as the
special case \cite{SSW11}. As the basic result of this paper, we
formulate the independence polynomial for the clique cover product
as follows.
\begin{thm}\label{thm-CCP}
Given two graphs $G$ and $H$, assume that $\mathscr{C}$ is a clique
cover of $G$ and $U$ is a subset of $V(H)$. Let $|\mathscr{C}|=q$.
Then we have
 $$I(G^{\mathscr{C}}\star H^U;x)=I^q(H;x)I\left(G;\frac{xI(H-U;x)}{I(H;x)}\right).$$
\end{thm}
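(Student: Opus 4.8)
The plan is to prove the identity by a direct combinatorial expansion, classifying each independent set of $G^{\mathscr{C}}\star H^U$ according to its intersection with the underlying copy of $G$. First I would fix notation: write $H_1,\dots,H_q$ for the $q$ appended copies of $H$ attached to the cliques $C_1,\dots,C_q$, and let $U_i\subseteq V(H_i)$ be the copy of $U$ inside $H_i$. By construction the only edges between $V(G)$ and $V(H_i)$ are those joining each vertex of $C_i$ to each vertex of $U_i$, and for $i\neq j$ there are no edges between $H_i$ and $H_j$.

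Next I would write an arbitrary independent set $S$ of the product as a disjoint union $S=A\cup B_1\cup\cdots\cup B_q$, where $A=S\cap V(G)$ and $B_i=S\cap V(H_i)$. Independence of $S$ is then equivalent to three conditions: $A$ is independent in $G$; each $B_i$ is independent in $H_i$; and whenever $A$ meets $C_i$, the block $B_i$ must avoid $U_i$, since every vertex of $C_i$ is adjacent to every vertex of $U_i$. The decisive observation is that, because $\mathscr{C}$ is a clique cover, the cliques $C_i$ partition $V(G)$ and each $C_i$ induces a complete subgraph of $G$; hence an independent set $A$ meets every $C_i$ in at most one vertex, so that $|A|$ equals the number of indices $i$ with $A\cap C_i\neq\emptyset$.

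With this in hand I would evaluate the generating function by summing $x^{|S|}$ over all independent sets $S$. Fixing the independent set $A\subseteq V(G)$ and setting $J(A)=\{i:A\cap C_i\neq\emptyset\}$, the blocks $B_i$ range freely and independently: for $i\in J(A)$ the admissible choices contribute the factor $I(H-U;x)$, while for $i\notin J(A)$ they contribute $I(H;x)$. Combining these with the weight $x^{|A|}$ gives
$$I(G^{\mathscr{C}}\star H^U;x)=\sum_{A}x^{|A|}\,I(H-U;x)^{|J(A)|}\,I(H;x)^{q-|J(A)|},$$
the sum running over independent sets $A$ of $G$. Substituting the key equality $|J(A)|=|A|$ and factoring out $I^q(H;x)$ turns each summand into $\left(xI(H-U;x)/I(H;x)\right)^{|A|}$; collecting terms according to $|A|=k$ then identifies the sum as $I^q(H;x)\,I\!\left(G;\,xI(H-U;x)/I(H;x)\right)$, which is the claim.

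The only point requiring real care is the bookkeeping in this last step, namely matching the exponent of $x$ to the number of cliques met by $A$. This is precisely where the clique-cover hypothesis is used: it forces $|A|=|J(A)|$, which is exactly what allows $x^{|A|}$ and $I(H-U;x)^{|J(A)|}$ to merge into a single power and thereby produces one substitution variable. No analytic difficulty arises, and since each independent set of $G$ meets at most one vertex per clique we have $\alpha(G)\le q$, so every exponent $q-k$ above is nonnegative; thus the factor $I(H;x)$ appearing in the denominator is only formal and cancels against the prefactor $I^q(H;x)$, leaving a genuine polynomial identity.
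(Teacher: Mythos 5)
Your proof is correct and follows essentially the same route as the paper: both classify each independent set of $G^{\mathscr{C}}\star H^U$ by its intersection $A$ with $V(G)$, use the key fact that an independent set of size $m$ meets exactly $m$ cliques of the cover (so the attached copies split into $m$ copies contributing $I(H-U;x)$ and $q-m$ copies contributing $I(H;x)$), and then resum over $|A|$. Your write-up is merely a cleaner, generating-function phrasing of the paper's coefficient-convolution computation, with the independence conditions spelled out more explicitly.
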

In view of Theorem~\ref{thm-CCP}, the following is immediate.
\begin{coro}
Given two graphs $G$ and $H$, assume that $\mathscr{C}$ is a clique
cover of $G$ and $U$ is a subset of $V(H)$. Let $|\mathscr{C}|=q$.
Then $I^{q-\alpha(G)}(H;x)$ divides  $I(G^{\mathscr{C}}\star
H^U;x)$.
\end{coro}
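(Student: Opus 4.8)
The plan is to start directly from the formula of Theorem~\ref{thm-CCP} and to clear the denominator by expanding the independence polynomial of $G$ as a polynomial in its argument. Writing $I(G;y)=\sum_{k=0}^{\alpha(G)}i_k(G)y^k$ and substituting $y=xI(H-U;x)/I(H;x)$, one obtains after multiplying through by $I^q(H;x)$ the identity
\begin{equation*}
I(G^{\mathscr{C}}\star H^U;x)=\sum_{k=0}^{\alpha(G)}i_k(G)\,x^k\,I(H-U;x)^k\,I(H;x)^{q-k}.
\end{equation*}
The purpose of recasting the formula in this polynomial (rather than rational) form is that every summand now carries an explicit power of $I(H;x)$, and the smallest such power is governed by the top-degree term $k=\alpha(G)$.

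Next I would record the elementary but essential combinatorial fact that $q\geq\alpha(G)$. Indeed, the clique cover $\mathscr{C}$ partitions $V(G)$ into $q$ cliques, and any independent set of $G$ meets each clique in at most one vertex; hence a maximum independent set, of size $\alpha(G)$, occupies at most $q$ distinct cliques, giving $\alpha(G)\leq q$. This guarantees that the exponent $q-\alpha(G)$ is a nonnegative integer, so that the asserted divisibility is even meaningful in the polynomial ring.

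Finally, since $q-k\geq q-\alpha(G)\geq 0$ for every $k$ with $0\leq k\leq\alpha(G)$, the factor $I(H;x)^{q-\alpha(G)}$ can be pulled out of each term, yielding
\begin{equation*}
I(G^{\mathscr{C}}\star H^U;x)=I^{q-\alpha(G)}(H;x)\sum_{k=0}^{\alpha(G)}i_k(G)\,x^k\,I(H-U;x)^k\,I(H;x)^{\alpha(G)-k},
\end{equation*}
in which the remaining sum is a genuine polynomial in $x$. This exhibits $I^{q-\alpha(G)}(H;x)$ as a factor and proves the claim. There is no serious obstacle beyond the bookkeeping of exponents; the one point demanding care is the justification of $q\geq\alpha(G)$, without which the exponent would be negative and the statement would fail to make sense.
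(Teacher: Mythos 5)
Your proposal is correct and takes essentially the same route as the paper: the expansion $\sum_{k=0}^{\alpha(G)}i_k(G)\,x^k\,I(H-U;x)^k\,I(H;x)^{q-k}$ you derive is exactly the penultimate line of the paper's combinatorial proof of Theorem~\ref{thm-CCP} (and reappears as equality~(\ref{symm})), from which the paper treats the divisibility as immediate. Your explicit verification that $q\ge\alpha(G)$ (an independent set meets each clique of the cover at most once) is a detail the paper leaves implicit, and it is correctly argued.
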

In fact, Theorem~\ref{thm-CCP} has generalized some known results,
e.g., the following corollaries obtained by Gutman~\cite{Gut92} and
Rosenfeld~\cite{R09}, respectively. In addition, our method is
different from theirs.
\begin{coro}\emph{\cite[Gutman]{Gut92}}\label{coro-Gut}
For any graph $G$ of order $n$, $I(G\circ
H;x)=I^n(H;x)I\left(G;\frac{x}{I(H;x)}\right)$.
\end{coro}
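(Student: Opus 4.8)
The plan is to recognize the corona $G\circ H$ as a special instance of the clique cover product and then invoke Theorem~\ref{thm-CCP} directly. As already observed in the introduction, if we take the clique cover $\mathscr{C}$ of $G$ in which every clique $C_i$ is a single vertex, and we let $U=V(H)$, then the resulting graph $G^{V(G)}\star H^{V(H)}$ is precisely the corona $G\circ H$ of Frucht and Harary: each vertex of $G$ receives its own copy of $H$, and since $U=V(H)$ that vertex is joined to every vertex of its copy, which is exactly the defining construction of the corona. So the whole corollary reduces to plugging the right parameters into the master formula.

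First I would fix the parameters of Theorem~\ref{thm-CCP} for this choice. Since each clique is a singleton and $G$ has order $n$, the number of cliques is $q=|\mathscr{C}|=n$. Second, I would evaluate the factor $I(H-U;x)$ appearing in the formula. Here $U=V(H)$, so $H-U=H-V(H)$ is the empty graph on no vertices; its only independent set is the empty set, whence $I(H-V(H);x)=1$.

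With these two substitutions, Theorem~\ref{thm-CCP} yields
$$I(G\circ H;x)=I\bigl(G^{V(G)}\star H^{V(H)};x\bigr)=I^n(H;x)\,I\!\left(G;\frac{x\cdot 1}{I(H;x)}\right)=I^n(H;x)\,I\!\left(G;\frac{x}{I(H;x)}\right),$$
which is the desired identity.

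The only point that requires care — and what I would flag as the main (albeit minor) obstacle — is the boundary convention $I(H-V(H);x)=1$ for the empty graph, together with a careful check that the combinatorial identification $G^{V(G)}\star H^{V(H)}=G\circ H$ really matches the standard corona on the nose, i.e.\ that the two graphs share the same vertex set and edge set. Both verifications are routine, so the hard work is entirely contained in Theorem~\ref{thm-CCP}, and the corollary follows immediately once that theorem is in hand.
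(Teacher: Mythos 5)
Your proof is correct and follows exactly the route the paper intends: the corona is the clique cover product with every clique a single vertex and $U=V(H)$, so $q=n$ and $I(H-V(H);x)=1$, and Theorem~\ref{thm-CCP} gives the identity at once. This matches the paper's own (implicit) derivation, including the specialization already noted in its introduction, so there is nothing to add.
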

\begin{coro}\emph{\cite{Gut92,R09}}\label{coro-root product}
If $G$ is a graph of order $n$ and $H$ is a graph with root $v$,
then $$I(G \overline{\circ}
H;x)=I^n(H-v;x)I\left(G;\frac{xI(H-N[v];x)}{I(H-v;x)}\right).$$
\end{coro}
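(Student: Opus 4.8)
The plan is to obtain this as a direct specialization of Theorem~\ref{thm-CCP}. The key observation, already recorded in the introduction, is that the rooted product $G\overline{\circ} H$ is exactly the clique cover product $G^{V(G)}\star (H-v)^{N(v)}$: here the clique cover $\mathscr{C}=V(G)$ consists of the $n$ singleton cliques of $G$, the attached graph is $H-v$, and the distinguished subset is $U=N(v)$. Indeed, attaching a copy of $H-v$ to a vertex $v_i$ of $G$ and joining $v_i$ to every vertex of $N(v)$ in that copy is precisely the effect of identifying $v_i$ with the root $v$ of a fresh copy of $H$, which is the Godsil--MacKay definition of the rooted product.

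With this identification in hand, I would apply Theorem~\ref{thm-CCP} verbatim, taking the ambient graph to be $G$, the attached graph (the role of $H$ in the theorem) to be $H-v$, the distinguished subset to be $U=N(v)$, and $q=|\mathscr{C}|=n$. The theorem then yields
$$I(G\overline{\circ} H;x)=I^{\,n}(H-v;x)\,I\!\left(G;\frac{x\,I\bigl((H-v)-N(v);x\bigr)}{I(H-v;x)}\right).$$
The only remaining step is a bookkeeping simplification of the inner graph: since $(H-v)-N(v)=H-\bigl(\{v\}\cup N(v)\bigr)=H-N[v]$, the argument of $I(G;\cdot)$ becomes $xI(H-N[v];x)/I(H-v;x)$, which is exactly the claimed formula.

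There is essentially no hard step here; the corollary is a substitution into Theorem~\ref{thm-CCP}. The one point that deserves care is confirming that the rooted product really coincides with the stated clique cover product---in particular, that identifying $v_i$ with the root creates adjacencies to exactly $N(v)$ and disturbs no edge of $G$. Once the roles of the two graphs and the value $q=n$ are matched correctly, the remainder is the elementary set identity $(H-v)-N(v)=H-N[v]$.
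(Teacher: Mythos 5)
Your proof is correct and follows exactly the route the paper intends: the identification $G\overline{\circ} H=G^{V(G)}\star (H-v)^{N(v)}$ stated in the introduction, substitution into Theorem~\ref{thm-CCP} with $q=n$, the attached graph $H-v$ and $U=N(v)$, and the set identity $(H-v)-N(v)=H-N[v]$. The paper gives no separate argument beyond this specialization, so your write-up matches its proof in substance and only adds the (worthwhile) verification that the two graph constructions coincide.
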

\begin{coro}\emph{\cite{R09}}
Let $H$ be a graph with root $v$, where $v$ is a pedant vertex and
$N(v)=u$. If $G$ is a graph of order $n$,  then
$$I(G \overline{\circ}
H;x)=I^n(H-v;x)I\left(G;\frac{xI(H-v-u;x)}{I(H-v;x)}\right).$$
\end{coro}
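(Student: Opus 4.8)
The plan is to derive this statement directly from Corollary~\ref{coro-root product}, which already supplies the independence polynomial of the rooted product $G\overline{\circ}H$ for an arbitrary root $v$. The only work is to simplify the factor $I(H-N[v];x)$ under the extra hypothesis that $v$ is a pendant vertex with $N(v)=u$, so no fresh computation of independence polynomials is needed.

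First I would record the meaning of the closed neighborhood in this special case. By definition $N[v]=N(v)\cup\{v\}$, and since $v$ is pendant its neighborhood is the single vertex $u$, so $N(v)=\{u\}$ and hence $N[v]=\{v,u\}$. Deleting this set from $H$ therefore gives $H-N[v]=H-\{v,u\}=H-v-u$, so the numerator $I(H-N[v];x)$ appearing in Corollary~\ref{coro-root product} equals $I(H-v-u;x)$.

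Next I would substitute this identity into the formula of Corollary~\ref{coro-root product}. Since the denominator $I(H-v;x)$ and the prefactor $I^n(H-v;x)$ do not involve $N[v]$ and are thus unchanged, the expression collapses immediately to
$$I(G\overline{\circ}H;x)=I^n(H-v;x)\,I\!\left(G;\frac{xI(H-v-u;x)}{I(H-v;x)}\right),$$
which is exactly the claimed equality.

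There is no genuine obstacle here: the corollary is a pure specialization of Corollary~\ref{coro-root product}, and the entire argument reduces to the set-theoretic identity $N[v]=\{v,u\}$ valid for a pendant root. The only point that warrants a word of care is verifying that the pendant hypothesis is used solely to rewrite $H-N[v]$ and that no other term in the general formula secretly depends on the structure of $N(v)$; once this is checked the result follows without any further manipulation.
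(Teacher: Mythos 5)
Your proof is correct and matches the paper's (implicit) treatment: the paper presents this corollary as an immediate specialization of Corollary~\ref{coro-root product}, which is exactly your route via the observation that a pendant root satisfies $N[v]=\{v,u\}$, hence $I(H-N[v];x)=I(H-v-u;x)$.
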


Let $a_0,a_1,\ldots,a_n$ be a sequence of nonnegative numbers. It is
{\it unimodal} if there is some $m$, called a {\it mode} of the
sequence, such that
$$a_0\le a_1\le\cdots\le a_{m-1}\le a_m\ge a_{m+1}\ge\cdots\ge a_n.$$
It is {\it log-concave} if $a_k^2\ge a_{k-1}a_{k+1}$ for all $1\le k
\le n-1$.  It is {\it symmetric} if $a_k=a_{n-k}$ for $0\le k\le n$.
Clearly, a log-concave sequence of positive numbers is unimodal
(see, e.g., Brenti~\cite{Bre89}). We say that a polynomial
$\sum_{k=0}^na_kx^k$ is {\it unimodal} ({\it log-concave}, {\it
symmetric}, respectively) if the sequence of its coefficients
$a_0,a_1,\ldots,a_n$ is unimodal (log-concave, symmetric,
respectively). A mode of the sequence $a_0,a_1,\ldots,a_n$ is also
called a mode of the polynomial $\sum_{k=0}^na_kx^k$. Unimodality
problems arise naturally in many branches of mathematics and have
been extensively investigated. See Stanley's survey~\cite{Sta89} and
Brenti's supplement~\cite{Bre94} for known results and open problems
on log-concave and unimodal sequences arising in algebra,
combinatorics and geometry. It is well known that if the polynomial
$\sum_{k=0}^{n}a_kx^k$ has only real zeros for the nonnegative
sequence $\{a_i\}$, then by Newton's inequalities the sequence
$\{a_n\}$ is log-concave and unimodal (see Hardy, Littlewood and
P\'olya~\cite[p. 104]{HLP52}).

Recently, the open problems on the unimodality~(Read~\cite[p.
68]{Rea68}) and log-concavity ~(Welsh~\cite[p. 266]{Wel76}) of the
chromatical polynomial of a graph  has been solved by
Huh~\cite{h12}. On the other hand, unimodality problems and zeros of
independence polynomials have been investigated, e.g., see
\cite{AMSE87,B10,BS10,BHN04,BN05,CW10,CS07,HL94,Ham90,LM02,LM04WSEAS,LM04CJM,LM05,LM06EJC,LM06CN,LM07,LM08,Man09,WZ10,Zhu07}
for an extensive literature in recent years. It is well known that
the matching polynomial of a graph has only real zeros~\cite{HL72,
Sch81}. In fact, the independence polynomial can be regarded as a
generalization of the matching polynomial because the matching
polynomial of a graph and the independence polynomial of its line
graph are identical. Wilf asked whether the independence polynomials
are also unimodal. However Alavi, Malde, Schwenk,
Erd\H{o}s~\cite{AMSE87} gave a negative example. They further
conjectured the following.
\begin{conj} The independence polynomial of any tree or forest is unimodal.
\end{conj}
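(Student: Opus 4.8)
The plan is to reduce the forest case to the tree case and then attack trees through the clique cover product formula of Theorem~\ref{thm-CCP}, propagating not unimodality itself but the stronger and more robust property of having only real zeros. First, since $I(G_1\cup G_2;x)=I(G_1;x)I(G_2;x)$ and a product of real-rooted polynomials with nonnegative coefficients is again real-rooted, hence unimodal by Newton's inequalities as recalled in the introduction, it suffices to handle a single tree $T$; a forest then inherits unimodality from its components. For $T$ I would argue by induction on $|V(T)|$, using the fundamental deletion identity $I(T;x)=I(T-v;x)+xI(T-N[v];x)$ obtained by splitting independent sets according to whether they contain a fixed leaf $v$.

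The key idea is to realize structured trees as clique cover products and to import reality of zeros through the substitution in Theorem~\ref{thm-CCP}. Writing $I(T;x)=I^{q}(H;x)\,I\!\left(G;\tfrac{xI(H-U;x)}{I(H;x)}\right)$ for a suitable decomposition $T=G^{\mathscr{C}}\star H^{U}$ (for instance the corona and rooted-product forms of Corollaries~\ref{coro-Gut} and \ref{coro-root product}, which already produce caterpillar- and spider-type trees), I would choose the decomposition so that $I(G;y)$ has only real, nonpositive zeros $r_1,\dots,r_m$. After clearing denominators, the zeros of $I(T;x)$ are governed by the factor $I^{q}(H;x)$ together with the polynomials $xI(H-U;x)-r_jI(H;x)$. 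The plan is to show that, whenever the pair $I(H-U;x)$ and $I(H;x)$ interlaces, each such combination has only real zeros for every $r_j\le 0$, so that $I(T;x)$ is real-rooted and therefore unimodal.

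The main obstacle is twofold. Analytically, the step that closes the induction is the interlacing of $I(H-U;x)$ and $I(H;x)$, which is what guarantees that $xI(H-U;x)-r\,I(H;x)$ stays real-rooted for all $r\le 0$; verifying this for a given gadget $H$ and subset $U$ is delicate and succeeds only for special $H$. Structurally, and more seriously, an arbitrary tree need not admit any clique cover product decomposition into pieces with real-rooted independence polynomials, and indeed the independence polynomial of a general tree need not have only real zeros, so reality of zeros cannot carry the full conjecture. I therefore expect this approach to settle the conjecture for the families that do decompose in this way (coronas, rooted products, and their iterates), while the general statement would require controlling the mode directly through the sum $I(T-v;x)+xI(T-N[v];x)$, the point at which unimodality, being unstable under addition of shifted unimodal polynomials, resists a naive induction.
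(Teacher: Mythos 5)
The first thing to say is that this statement is not a theorem of the paper at all: it is the Alavi--Malde--Schwenk--Erd\H{o}s conjecture, quoted in the introduction as an open problem, and the paper never claims a proof of it. The paper's actual contributions are to other, strictly more special unimodality statements --- Conjecture~\ref{conj+spider}, Conjecture~\ref{conj-LM}, and families such as $I(T\circ K_1;x)$ and $I(T\circ 2K_1;x)$ mentioned in the concluding remarks --- all settled by means of Theorem~\ref{thm-CCP}. So there is no ``paper's own proof'' to measure your attempt against; any argument you give must stand entirely on its own.

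Your proposal does not stand on its own, and to your credit your final paragraph concedes this; the gaps you name are exactly the fatal ones. First, real-rootedness cannot be the vehicle: already $I(K_{1,3};x)=(1+x)^3+x=x^3+3x^2+4x+1$ has negative discriminant, hence a pair of non-real zeros, so the property you propose to propagate by induction fails for a tree on four vertices. Second, a general tree admits no decomposition $T=G^{\mathscr{C}}\star H^{U}$ into smaller pieces: the clique cover product attaches a copy of $H$ to every clique of the cover in a completely regular way, and the trees so obtained (coronas, rooted products, and their iterates) form a thin subclass --- precisely the subclass the paper itself handles via Corollary~\ref{coro-coron} and Proposition~\ref{prop-root product}. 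Third, even your opening reduction from forests to trees is unsound under the conjecture's actual hypothesis: unimodality, unlike log-concavity or real-rootedness (Lemma~\ref{product}), is not preserved by products, which is exactly why the conjecture must name forests separately rather than deduce them from the tree case. In short, your plan would reprove what the paper proves (special families via Theorem~\ref{thm-CCP} and Lemma~\ref{lem+comp}) and leaves the conjecture itself untouched --- which is also where the paper leaves it.
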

Similarly, Levit and Mandrescu~\cite{LM03} also make the next
conjecture.

\begin{conj}
The independence polynomial of any very well-covered graph is
log-concave.
\end{conj}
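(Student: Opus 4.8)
The plan is to reduce the conjecture to the clique cover product formula of Theorem~\ref{thm-CCP} via a structural description of very well-covered graphs. A graph $G$ without isolated vertices is very well-covered precisely when it carries a perfect matching $\{x_iy_i : 1\le i\le n\}$ in which the $x_i$ form a maximum independent set and the matched partners obey Favaron's adjacency conditions; this is exactly the data needed to present $G$ — or at least a substantial subclass of such graphs — as a clique cover product $G_0^{\mathscr C}\star H^U$, in which $G_0$ is carried by the independent side, $\mathscr C$ is the clique cover induced by the matching, and a common gadget $H$ (attached along $U$) encodes the matched partners. Requiring a single common gadget is already a restriction, since in a general very well-covered graph the local structure can vary from one matched vertex to another; the cleanest case is the corona $G_0\circ K_1$ (one pendant vertex per vertex of $G_0$), which is always very well-covered and for which $H=K_1$ works uniformly. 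Here Corollary~\ref{coro-Gut} gives $I(G_0\circ K_1;x)=(1+x)^nI\!\left(G_0;\frac{x}{1+x}\right)=\sum_k i_k(G_0)\,x^k(1+x)^{n-k}$, so the whole problem collapses to a single explicit substitution.

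The analytic core is then to show that this operation preserves log-concavity, and I would pursue two complementary routes. The first is through real zeros, exploiting that the clique cover product is engineered to preserve reality of zeros: if $H$ and $U$ are chosen so that $I(H;x)$ and $I(H-U;x)$ are real-rooted with the right interlacing, then the M\"obius-type substitution $y=\frac{xI(H-U;x)}{I(H;x)}$ together with the factor $I^q(H;x)$ carries a real-rooted base independence polynomial to a real-rooted polynomial, whence Newton's inequalities yield log-concavity. The second route, needed when the base is not real-rooted, is to prove directly that the binomial-type transform sending $\{i_k(G_0)\}$ to the sequence $\sum_{k}i_k(G_0)\binom{n-k}{m-k}$ produces a log-concave sequence; the point is that the graph constraints on the $i_k(G_0)$ (for example $i_2\le\binom{i_1}{2}$ and Kruskal--Katona-type growth bounds) compensate for the possible non-log-concavity of $\{i_k(G_0)\}$ itself, keeping the transformed three-term ratios on the correct side of the inequality.

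The hard part is passing from these tractable subfamilies to \emph{every} very well-covered graph. The substitution does not preserve log-concavity for an arbitrary base polynomial, and a general very well-covered graph need not present as a clique cover product whose ingredients $H$ and $H-U$ have real-rooted independence polynomials, so neither route applies verbatim. I expect the main obstacle to be converting Favaron's adjacency conditions into a uniform clique cover presentation — or into a single log-concavity-preserving inequality — that genuinely uses the extra rigidity of the very well-covered case: the perfect matching and the constraints it forces on the sequence $\{i_k\}$. Accordingly, my realistic target is to settle the conjecture for all very well-covered graphs arising as clique cover products with real-rooted ingredients, thereby recovering and unifying the known corona and rooted-product cases, and to isolate the residual, genuinely non-real-rooted configurations as the crux that any complete proof must overcome.
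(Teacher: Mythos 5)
There is a genuine gap, and it starts one level higher than your argument: the statement you were asked to prove is an \emph{open conjecture} (due to Levit and Mandrescu), and the paper does not prove it either --- it only records it as motivation and then confirms special instances of it, most notably Conjecture~\ref{conj+spider} of Chen and Wang on $I(K_{t,n}\circ K_1;x)$, via Theorem~\ref{thm-CCP} and Proposition~\ref{prop-real-CCP}. Your proposal, to its credit, concedes in its final paragraph that it only reaches ``very well-covered graphs arising as clique cover products with real-rooted ingredients.'' So what you have written is a research plan for partial results, not a proof of the statement; judged as a proof it fails, and judged against the paper it lands on essentially the same territory the paper already covers (corona/clique-cover-product subfamilies handled through Theorem~\ref{thm-CCP}, Lemma~\ref{product}, Lemma~\ref{lem-br} and Proposition~\ref{prop-real-CCP}).

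Beyond that, the two concrete routes you sketch do not even settle the cases you call tractable. For the corona $G_0\circ K_1$ (which, note, does not exhaust very well-covered graphs --- $C_4$ is very well covered and is no corona), Corollary~\ref{coro-Gut} indeed gives $I(G_0\circ K_1;x)=\sum_k i_k(G_0)\,x^k(1+x)^{n-k}$, but your route 1 needs $I(G_0;x)$ real-rooted and your hybrid log-concavity argument (Proposition~\ref{prop-real-CCP}~(iii) in the paper's form) needs $I(G_0;x)$ log-concave; neither holds for arbitrary $G_0$, since by Alavi, Malde, Schwenk and Erd\H{o}s the sequence $\{i_k(G_0)\}$ can fail even unimodality. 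Your route 2 is then supposed to absorb exactly these bad bases, but the assertion that Kruskal--Katona-type constraints on $\{i_k(G_0)\}$ ``compensate'' and keep the transformed three-term ratios on the right side is offered with no proof and no candidate inequality; that assertion \emph{is} the open problem (already for coronas $G_0\circ K_1$), not a step toward it. So the crux you correctly identify at the end --- passing from clique-cover-product presentations with real-rooted or log-concave ingredients to all very well-covered graphs --- is left entirely untouched, which is precisely where the conjecture still stands open.
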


Whereas the independence polynomials for certain special classes of
graphs are unimodal and even have only real zeros. For instance, the
independence polynomial of a line graph has only real zeros. More
generally, the independence polynomial of a claw-free graph has only
real zeros (Chudnovsky and Seymour~\cite{CS07}). Although the
independence polynomial of almost every graph of order $n$ has a
nonreal zero, the average independence polynomials always have all
real and simple zeros (Brown and Nowakowski~\cite{BN05}). Hence an
interesting problem naturally arises.

\begin{prob}\emph{\cite{Bro08}}
When does the independence polynomial of a graph have only real
zeros ?
\end{prob}

The symmetry of the matching polynomial and the characteristic
polynomial of a graph were observed (see \cite{GM78,Ken92} for
instance). Thus, we naturally study the symmetric independence
polynomial. Stevanovi\'c~\cite{Ste98} showed the next result.
\begin{thm}\emph{\cite{Ste98}}\label{ste}
If there is an independent set $S$ in $G$ such that $|N(A)\cap
S|=2|A|$ holds for every independent set $A \subseteq V (G)-S$, then
$I(G; x)$ is symmetric.
\end{thm}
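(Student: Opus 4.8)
The plan is to compute $I(G;x)$ in closed form by decomposing every independent set of $G$ according to how it meets the distinguished independent set $S$, and then to read off symmetry directly from the resulting expression. Write $s = |S|$. Every independent set $I \subseteq V(G)$ splits uniquely as $I = A \cup B$, where $A = I \cap (V(G)-S)$ is an independent set contained in $V(G)-S$ and $B = I \cap S$. Since $B \subseteq S$ is automatically independent, the union $A \cup B$ is independent if and only if there is no edge between $A$ and $B$, that is, if and only if $B \subseteq S \setminus N(A)$. Thus, for a fixed independent set $A \subseteq V(G)-S$, the admissible sets $B$ are precisely the subsets of $S \setminus (N(A)\cap S)$, a set of cardinality $s - |N(A)\cap S|$.

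First I would invoke the hypothesis at exactly this point: $|N(A)\cap S| = 2|A|$, so the number of independent sets $I$ with $I \setminus S = A$ and $|I| = k$ equals $\binom{s - 2|A|}{k - |A|}$. Summing the generating function over all such $A$ then gives
$$I(G;x) = \sum_{A} x^{|A|}(1+x)^{s - 2|A|},$$
where $A$ ranges over all independent sets of $G$ contained in $V(G)-S$, including $A = \emptyset$, which contributes the term $(1+x)^{s}$.

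Next I would determine the degree and establish the symmetry. The term from $A = \emptyset$ has degree $s$, while every other summand $x^{|A|}(1+x)^{s-2|A|}$ has degree $s - |A| < s$; hence $\deg I(G;x) = s$, i.e.\ $\alpha(G) = |S|$. For the symmetry itself, the key observation is that each individual term is palindromic of length $s+1$: a direct computation yields
$$x^{s}\,(1/x)^{a}\,(1 + 1/x)^{s-2a} = x^{a}(1+x)^{s-2a},$$
so that each summand $f_a(x) = x^{a}(1+x)^{s-2a}$ satisfies $x^{s}f_a(1/x) = f_a(x)$; equivalently, its coefficient $\binom{s-2a}{k-a}$ equals $\binom{s-2a}{(s-k)-a}$. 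Because the relation $x^{s}p(1/x) = p(x)$ is preserved under addition, summing over $A$ gives $x^{s}I(G;1/x) = I(G;x)$, which is precisely the assertion that $I(G;x)$ is symmetric.

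The computation is short, so I do not expect a serious technical obstacle; the one point requiring care is verifying that $I \mapsto (A,B)$ is a genuine bijection between independent sets of $G$ and admissible pairs, and pinning down the single place where the hypothesis is actually used. That place is the passage from $s - |N(A)\cap S|$ to the exponent $s - 2|A|$: it is exactly the factor $2$ in $|N(A)\cap S| = 2|A|$ that forces every term to be palindromic about the common center $s/2$, aligning all summands to the same degree $s$. Any other constant in place of $2$ would leave the terms centered at different points and destroy the symmetry, which is why this precise balance condition is the crux of the statement.
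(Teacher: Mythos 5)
Your proof is correct and follows essentially the same route as the paper: the paper also derives the closed form $I(G;x)=\sum_{k}i_k(G[V-S])\,x^k(x+1)^{|S|-2k}$ by the same two-stage count (an independent set $A\subseteq V(G)-S$ together with a free choice of vertices in the $|S|-2|A|$ unblocked vertices of $S$), and concludes symmetry because each summand is palindromic about the common center $|S|/2$. Your explicit verification that $x^{s}f_a(1/x)=f_a(x)$ for each summand, and of $\alpha(G)=|S|$, just fills in details the paper leaves implicit.
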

By virtue of this result, Stevanovi\'c deduced the following
corollary.

\begin{coro}
\begin{itemize}
\item [\rm (i)]
If a graph $G$ has $i_{\alpha(G)}(G)=1$, $i_{\alpha(G)-1}(G)=|V(G)|$
and the unique maximum independence set $S$ satisfies $|N(u)\cap
S|=2$ for every vertex $u\in V-S$, then $I(G;x)$ is symmetric.
\item [\rm (ii)]
If $G$ is a claw-free graph with $i_{\alpha(G)}=1$, $i_{\alpha(G)-1}
= |V (G)|$, then $I(G; x)$ is symmetric.
\end{itemize}
\end{coro}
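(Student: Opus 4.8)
The plan is to derive both parts from Theorem~\ref{ste}, taking $S$ to be the maximum independent set, which is unique because $i_{\alpha(G)}(G)=1$. Throughout write $\alpha=\alpha(G)$, $n=|V(G)|$, and $N(A)=\bigcup_{a\in A}N(a)$. For part (i) everything reduces to verifying the hypothesis of Theorem~\ref{ste}, namely $|N(A)\cap S|=2|A|$ for every independent set $A\subseteq V(G)-S$. Since $|N(a)\cap S|=2$ for each $a\notin S$ by assumption, subadditivity gives $|N(A)\cap S|\le\sum_{a\in A}|N(a)\cap S|=2|A|$ for free, so the real content is the reverse inequality, which amounts to showing that the $S$-neighborhoods $N(a)\cap S$, $a\in A$, are pairwise disjoint.

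First I would use the counting hypothesis $i_{\alpha-1}(G)=n$ to list all independent sets of size $\alpha-1$. There are two natural families: the $\alpha$ sets $S\setminus\{s\}$ with $s\in S$, and, for each $u\notin S$, the set $(S\setminus N(u))\cup\{u\}$, which is independent and has size $\alpha-|N(u)\cap S|+1=\alpha-1$ precisely because $|N(u)\cap S|=2$. These $\alpha+(n-\alpha)=n$ sets are pairwise distinct, since a set of the first type meets $V-S$ in no vertex while one of the second type meets it in exactly the vertex $u$. As $i_{\alpha-1}(G)=n$, they must therefore be all the independent $(\alpha-1)$-sets; in particular no independent set of size $\alpha-1$ contains two vertices of $V-S$.

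The disjointness now follows. Suppose two distinct vertices $u,u'\in A$ had a common neighbor in $S$, and put $W=S\setminus(N(u)\cup N(u'))$; then $W\cup\{u,u'\}$ is independent, and $|(N(u)\cup N(u'))\cap S|$ equals $2$ or $3$ according as $u,u'$ share two or one neighbors in $S$. In the former case $W\cup\{u,u'\}$ has size $\alpha$ yet differs from $S$, contradicting $i_\alpha(G)=1$; in the latter it has size $\alpha-1$ and contains the two vertices $u,u'$ of $V-S$, contradicting the previous paragraph. Hence the sets $N(a)\cap S$ are pairwise disjoint, $|N(A)\cap S|=2|A|$, and Theorem~\ref{ste} yields the symmetry of $I(G;x)$. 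For part (ii) I would simply check its additional hypothesis $|N(u)\cap S|=2$ for all $u\in V-S$: the inequality $|N(u)\cap S|\ge 2$ holds whenever $i_\alpha(G)=1$, for if $u$ had at most one neighbor in $S$ then deleting that neighbor (if present) from $S$ and inserting $u$ would give an independent set of size at least $\alpha$ other than $S$, contradicting that $S$ is the unique maximum independent set; while claw-freeness forbids $|N(u)\cap S|\ge 3$, since three neighbors $s_1,s_2,s_3\in S$ of $u$ are pairwise non-adjacent and so $\{u,s_1,s_2,s_3\}$ would induce a claw $K_{1,3}$. Thus $|N(u)\cap S|=2$ and part (i) applies.

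I expect the main obstacle to be the counting step of the second paragraph: the symmetry ultimately rests on converting the numerical condition $i_{\alpha-1}(G)=n$ into the structural statement that every $(\alpha-1)$-independent set meets $V-S$ in at most one vertex, and then into the pairwise disjointness of $S$-neighborhoods demanded by Theorem~\ref{ste}. Once this is in place, the claw-free reduction in part (ii) is routine.
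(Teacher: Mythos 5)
Your proof is correct and takes essentially the same route as the paper: both parts are deduced from Theorem~\ref{ste} by verifying its hypothesis $|N(A)\cap S|=2|A|$, which is precisely the deduction the paper attributes to Stevanovi\'c~\cite{Ste98} rather than spelling out itself. Your verification --- classifying the independent sets of size $\alpha(G)-1$ to force disjointness of the sets $N(a)\cap S$, and in part (ii) using uniqueness of $S$ and claw-freeness to pin down $|N(u)\cap S|=2$ --- is sound.
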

According to this corollary, Stevanovi\'c~\cite{Ste98} further
obtained a few ways to construct graphs having symmetric
independence polynomials, e.g., $G\circ 2K_1$ and $\mathscr{C}(G)$.
However, the following general problem is still open.

\begin{prob}
When is the independence polynomial of a graph symmetric ?
\end{prob}

The organization of this paper is as follows. In Section $2$, we
give the proof of Theorem~\ref{thm-CCP}. In Section $3$, based on
the formula in Theorem~\ref{thm-CCP}, we present various results
that the clique cover product of some special graphs preserves
symmetry, unimodality, log-concavity or reality of zeros of
independence polynomials. As applications we derive several known
facts and solve some unimodality conjectures and problems in a
unified manner. Finally, in the concluding remarks, we also give the
similar result for another new graphs operation called the cycle
cover product.

\section{The proof of Theorem~\ref{thm-CCP}}
\hspace*{\parindent}
\begin{proof} We here give a combinatorial proof. Let
$I(G;x)=\sum_{k=0}^{\alpha(G)}s_k x^k$,
$I(H;x)=\sum_{k=0}^{\alpha(H)}a_k x^k$,
$I(H-U;x)=\sum_{k=0}^{\alpha(H-U)}b_k x^k$  and
$I(G^{\mathscr{C}}\star H^U;x)=\sum_{k\geq0} t_k x^k$. We can give
an explicit expression of $t_k$ in the following method. For each
$k$, select $k$ independent elements from $V(G^{\mathscr{C}}\star
H^U)$ in a two-stage process. First, let us choose $m$ independent
elements from the $V(G)$. And then select the remaining $(k-m)$
independent elements from $V(m(H-U)\bigcup (q-m)H)$. The number of
ways in which we make two choices is $s_m$ and
$$\sum\limits_{i_1+i_2+\ldots+i_m+j_1+j_2+\ldots+j_{q-m}=k-m}b_{i_1}b_{i_2}\cdots
b_{i_{m}}a_{j_1}a_{j_2}\cdots a_{j_{q-m}}$$ respectively. In
consequence, we obtain that
$$t_k=\sum\limits_{m=0}^k s_m\sum\limits_{i_1+i_2+\ldots+i_m+j_1+j_2+\ldots+j_{q-m}=k-m}b_{i_1}b_{i_2}\cdots
b_{i_{m}}a_{j_1}a_{j_2}\cdots a_{j_{q-m}}.$$ Thus we have
\begin{eqnarray*}
I(G^{\mathscr{C}}\star H^U;x)&=&\sum\limits_{k\geq0} t_k x^k\\
&=&\sum\limits_{k\geq0}\left(\sum\limits_{m=0}^k
s_m\sum\limits_{i_1+i_2+\ldots+i_m+j_1+j_2+\ldots+j_{q-m}=k-m}b_{i_1}b_{i_2}\cdots
b_{i_{m}}a_{j_1}a_{j_2}\cdots a_{j_{q-m}}\right)x^k\\
&=&\sum\limits_{m=0}^{\alpha(G)} s_m x^m\sum\limits_{k\geq m}
\left(\sum\limits_{i_1+i_2+\ldots+i_m+j_1+j_2+\ldots+j_{q-m}=k-m}b_{i_1}b_{i_2}\cdots
b_{i_{m}}a_{j_1}a_{j_2}\cdots a_{j_{q-m}}x^{k-m}\right)\\
&=&\sum\limits_{m=0}^{\alpha(G)} s_m I^{m}(H-U;x) I^{q-m}(H;x)x^m\\
&=&I^q(H;x)I\left(G;\frac{x I(H-U;x)}{I(H;x)}\right),
\end{eqnarray*}
which is desired.
\end{proof}
\section{Various results
for symmetry, unimodality, log-concavity or reality of zeros of
independence polynomials.} \hspace*{\parindent}
In this section, based on the formula in Theorem~\ref{thm-CCP}, we
present various results that the clique cover product of some
special graphs preserves symmetry, unimodality, log-concavity or
reality of zeros of independence polynomials. We not only derive
several known facts in a unified manner, but also settle some
unimodality conjectures and problems in the literature.

Note that if we can give the factorization for the independence
polynomial, then the next result will be very useful in solving
unimodality problems for independence polynomials. We refer the
reader to Stanley's survey article~\cite{Sta89} for further
information.
\begin{lem}\label{product}
Let $f(x)$ and $g(x)$ be polynomials with positive coefficients.
\begin{itemize}
\item [\rm (i)] If both $f(x)$ and $g(x)$ have only real zeros, then
so does their product $f(x)g(x)$.
\item [\rm (ii)] If both $f(x)$ and $g(x)$ are log-concave, then
so is their product $f(x)g(x)$.
\item [\rm (iii)] If $f(x)$ is log-concave and $g(x)$ is unimodal, then
their product $f(x)g(x)$ is unimodal.
\item [\rm (iv)] If both $f(x)$ and $g(x)$ are symmetric and unimodal, then
so is their product $f(x)g(x)$. In addition, the mode of $f(x)g(x)$
is the sum of modes of $f(x)$ and $g(x)$.
\end{itemize}
\end{lem}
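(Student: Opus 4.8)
The plan is to prove the four parts of Lemma~\ref{product} in the order (i), (ii), (iv), (iii), since each part suggests a natural technique and the later parts can lean on the earlier ones. Throughout I write $f(x)=\sum_{i=0}^{m}a_ix^i$ and $g(x)=\sum_{j=0}^{n}b_jx^j$ with all $a_i,b_j>0$, and I denote the coefficients of the product by $c_k=\sum_{i+j=k}a_ib_j$.

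For part (i), the cleanest route is to invoke the fundamental theorem of algebra together with the sign pattern of the coefficients. Since $f$ has positive coefficients and only real zeros, every real zero must be strictly negative (a nonnegative real root would force a sign change or a vanishing constant term), so $f(x)=a_m\prod(x+r_i)$ with all $r_i>0$, and likewise for $g$. Then $f(x)g(x)$ factors as a product of linear factors $x+r$ with $r>0$ together, giving a polynomial with only (negative) real zeros. This is essentially immediate; I expect no difficulty here.

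For part (ii), I would prove log-concavity of the product directly from the convolution. The key inequality to establish is $c_k^2\ge c_{k-1}c_{k+1}$ given $a_{i}^2\ge a_{i-1}a_{i+1}$ and $b_{j}^2\ge b_{j-1}b_{j+1}$. The standard and most reliable approach is to recall that log-concavity of a positive sequence with no internal zeros is equivalent to the total positivity (more precisely, the $\mathrm{TP}_2$ property) of the associated Toeplitz matrix, and that the convolution of two such sequences corresponds to a product of Toeplitz matrices, which preserves $\mathrm{TP}_2$ by the Cauchy--Binet formula. Alternatively, one can cite that this is a classical fact (for instance in Brenti~\cite{Bre89}) and give the short Cauchy--Binet argument. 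I anticipate that verifying the convolution inequality by hand is the most delicate computation, so phrasing it through total positivity and Cauchy--Binet is the safer plan. Part (iii) then follows quickly: writing $f(x)g(x)=\sum_k c_k x^k$ where now $g$ is only assumed unimodal, I would fix a mode $t$ of $g$ and show that the partial convolutions inherit unimodality from the log-concavity of $f$; concretely, one compares $c_k$ and $c_{k+1}$ using the log-concave weights $a_i$ against the unimodal weights $b_j$ and checks that the sign of $c_{k+1}-c_k$ changes at most once from $+$ to $-$.

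For part (iv), symmetry of the product is the routine observation that if $a_i=a_{m-i}$ and $b_j=b_{n-j}$ then $c_k=\sum_{i+j=k}a_ib_j=\sum_{i+j=k}a_{m-i}b_{n-j}=c_{m+n-k}$, so $f(x)g(x)$ is symmetric of degree $m+n$. Unimodality of a symmetric polynomial is equivalent to unimodality on the first half of its coefficients, and for a symmetric unimodal polynomial the mode sits at the center $m/2$ (resp. $n/2$); since the product is symmetric, its mode is at $(m+n)/2$, which is exactly the sum of the two modes, giving the additional claim. The only genuine obstacle across the whole lemma is part (iii): unimodality is not preserved under convolution with a merely unimodal factor in general, so the argument must use log-concavity of $f$ in an essential way, and I expect the careful bookkeeping of where the difference sequence $c_{k+1}-c_k$ switches sign to be the technical heart of the proof.
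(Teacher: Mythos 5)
The paper itself gives no proof of this lemma: it is quoted as a standard fact, with a pointer to Stanley's survey \cite{Sta89}, so your proposal can only be compared against the classical arguments. On that basis, parts (i) and (ii) of your plan are sound. Part (i) is immediate (the zeros of $fg$ are the union of the zeros of $f$ and $g$; you do not even need the negativity observation), and the Toeplitz/Cauchy--Binet route to (ii) is the standard one and is valid here because positive coefficients rule out internal zeros.

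There is, however, a genuine gap in part (iv). You prove symmetry of the product correctly, but you never prove its unimodality: you only record where the mode would have to sit \emph{if} the product were unimodal. Symmetry does not imply unimodality (the coefficient sequence $1,0,1$ is symmetric but not unimodal), and the product of two merely unimodal polynomials need not be unimodal, so the two hypotheses must be used together; nothing in your text does this. The standard argument writes each symmetric unimodal factor as a nonnegative combination of centered blocks, $f(x)=\sum_i \alpha_i\left(x^i+x^{i+1}+\cdots+x^{m-i}\right)$ with $\alpha_i\ge 0$, and similarly for $g$; the product of two blocks centered at $m/2$ and $n/2$ is checked directly to be symmetric and unimodal with center $(m+n)/2$, and a sum of symmetric unimodal polynomials sharing the same center is again symmetric and unimodal. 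That argument also yields the mode statement honestly. Part (iii) as written is likewise not yet a proof: asserting that $c_{k+1}-c_k$ changes sign at most once, from $+$ to $-$, is a restatement of the conclusion rather than an argument. The clean mechanism is the identity $\Delta(f*g)=f*(\Delta g)$ for difference sequences, combined with the variation-diminishing property of convolution with a $\mathrm{PF}_2$ (positive, log-concave) sequence; since $g$ is unimodal, $\Delta g$ has at most one sign change, hence so does $\Delta(fg)$, and the leading sign is $+$. The $\mathrm{TP}_2$/Cauchy--Binet machinery you already set up in (ii) delivers exactly this variation-diminishing statement, so I would make it the explicit engine of (iii) instead of the promised bookkeeping.
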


Let $P(x)$ be a real polynomial of degree $n$. Define its {\it
reciprocal polynomial} by
$$P^*(x):=x^nP\left(\frac{1}{x}\right).$$ The following facts are
elementary but very useful in the sequel.
\begin{itemize}
\item[\rm (i)]
If $P(0)\neq 0$, then $\deg~P^*(x)=\deg~P(x)$ and $(P^*)^*(x)=P(x)$.
\item[\rm (ii)]
$P(x)$ has only real zeros if and only if $P^*(x)$ has only real
zeros.
\item[\rm (iii)]
$P(x)$ is log-concave if and only if $P^*(x)$ is log-concave.
\item[\rm (iv)]
$P(x)$ is symmetric if and only if $P^*(x)=P(x)$.
\end{itemize}
\subsection{Symmetry and unimodality of independence polynomials}
The next result gives a characterization of the graphs having the
symmetric or unimodal independence polynomials.
\begin{prop}\label{prop-semmetry-CCP}
Given two graphs $G$ and $H$, let $\mathscr{C}$ be a clique cover of
$G$ and $U$ be a subset of $V(H)$. Let $|\mathscr{C}|=q$ and
$\alpha(H)=\alpha(H-U)+2$. Then we have the following.
\begin{itemize}
\item[\rm (i)]
If both $I(H;x)$ and $I(H-U;x)$ are symmetric, then so is
$I(G^{\mathscr{C}}\star H^U;x)$.
\item[\rm (ii)]
If both $I(H;x)$ and $I(H-U;x)$ are symmetric and unimodal, then so
is $I(G^{\mathscr{C}}\star H^U;x)$.
\end{itemize}
\end{prop}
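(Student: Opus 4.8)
The plan is to run everything off the expansion of $I(G^{\mathscr{C}}\star H^U;x)$ already produced inside the proof of Theorem~\ref{thm-CCP}. Writing $I(G;x)=\sum_{m=0}^{\alpha(G)}s_mx^m$ with $s_m=i_m(G)\ge 0$, and abbreviating $f(x):=I(H;x)$ (degree $\alpha(H)=:a$) and $g(x):=I(H-U;x)$ (degree $\alpha(H-U)=:b$, so $a=b+2$), that proof gives
$$I(G^{\mathscr{C}}\star H^U;x)=\sum_{m=0}^{\alpha(G)} s_m\,x^m\,g(x)^m\,f(x)^{q-m}=:\sum_{m=0}^{\alpha(G)} t_m(x).$$
I would show that every summand $t_m(x)$ is a nonnegative polynomial that is symmetric (for part (i)), respectively symmetric and unimodal (for part (ii)), about one and the same center, and then add the summands.

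For part (i), set $N:=q\alpha(H)$; this is the degree of the whole polynomial, attained at $m=0$ since $s_0=1$. Using symmetry of $f$ and $g$ in the reciprocal form $f(1/x)=x^{-a}f(x)$ and $g(1/x)=x^{-b}g(x)$, I would verify the term-by-term identity $x^{N}t_m(1/x)=t_m(x)$: the exponent of $x$ produced is $N-m-bm-a(q-m)=m(a-b-1)=m$, where the collapse to $m$ is exactly the degree hypothesis $a=b+2$. Summing over $m$ then yields $x^{N}P(1/x)=P(x)$, i.e. $P^*(x)=P(x)$, which is symmetry by the reciprocal-polynomial criterion recalled above. Equivalently, one sees that each $t_m$ has support $[m,N-m]$ and is symmetric about the common midpoint $\tfrac12\big(2m+(q-m)\alpha(H)+m\alpha(H-U)\big)$, which collapses to $N/2$ independently of $m$; hence the sum is symmetric about $N/2$.

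For part (ii), I would add unimodality. By Lemma~\ref{product}(iv), since $f$ and $g$ are symmetric and unimodal, so is the product $f(x)^{q-m}g(x)^m$; multiplying by $x^m$ (which only shifts the support) and by the nonnegative constant $s_m$ preserves both properties, so each $t_m$ is symmetric and unimodal about the common center $N/2$. The point is that such summands may be added safely: a polynomial symmetric and unimodal about $N/2$ has coefficients nondecreasing on the index range $[0,N/2]$ after padding with zeros, and a sum of sequences nondecreasing on $[0,N/2]$ is again nondecreasing there; combined with the common symmetry this forces $P$ to be symmetric and unimodal about $N/2$. Here one should record that the nested supports $[m,N-m]$ are nonempty, since $m\le\alpha(G)\le q\le q\alpha(H)/2=N/2$, using $\alpha(G)\le q$ for a $q$-clique cover and $\alpha(H)\ge 2$.

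The bookkeeping in part (i) (the midpoint/exponent computation) is routine, and it is precisely where the hypothesis $\alpha(H)=\alpha(H-U)+2$ is consumed. The one genuinely delicate point is part (ii): unimodality of a sum is \emph{not} automatic, and is rescued only by the fact that all summands $t_m$ are symmetric about the \emph{same} center $N/2$, so that each is monotone on the common half $[0,N/2]$. That common center is exactly what the degree condition guarantees, so it is the crux of the whole argument.
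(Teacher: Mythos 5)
Your proof is correct and takes essentially the same route as the paper: both work from the expansion $\sum_{m} s_m\,[x\,I(H-U;x)]^{m}\,I(H;x)^{\,\cdot\,-m}$ coming out of Theorem~\ref{thm-CCP}, both use Lemma~\ref{product}(iv), and both exploit the fact that the hypothesis $\alpha(H)=\alpha(H-U)+2$ forces every summand to be symmetric (resp.\ symmetric and unimodal) about one common center, so the summands can be added. The only differences are presentational: for (i) the paper performs a single reciprocal-polynomial computation $I^{*}(G^{\mathscr{C}}\star H^{U};x)=I(G^{\mathscr{C}}\star H^{U};x)$ directly on the closed product formula instead of your term-by-term check, and for (ii) it first factors out $I^{q-\alpha(G)}(H;x)$ so the summands are centered at $\alpha(G)\alpha(H)/2$, whereas you keep the full power $I(H;x)^{q-m}$ inside each summand (center $q\alpha(H)/2$) and, usefully, make explicit the common-center summation argument that the paper leaves implicit.
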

\begin{proof}
To show that $I(G^{\mathscr{C}}\star H^U;x)$ is symmetric, we only
need to prove that $I^*(G^{\mathscr{C}}\star
H^U;x)=I(G^{\mathscr{C}}\star H^U;x)$. In view of
Theorem~\ref{thm-CCP}, we have
$$I(G^{\mathscr{C}}\star H^U;x)=I^q(H;x)I\left(G;\frac{x I(H-U;x)}{I(H;x)}\right).$$ Thus it
is clear that
\begin{eqnarray*}
I^*(G^{\mathscr{C}}\star H^U;x)&=&x^{q\alpha(H)}I\left(G^{\mathscr{C}}\star H^U;\frac{1}{x}\right)\\
&=&[I^*(H;x)]^qI\left(G;\frac{\frac{1}{x}I(H-U;\frac{1}{x})}{I(H;\frac{1}{x})}\right)\\
&=&[I^*(H;x)]^qI\left(G;\frac{x I^*(H-U;x)}{I^*(H;x)}\right)\\
&=&I^q(H;x)I\left(G;\frac{x I(H-U;x)}{I(H;x)}\right)\\
&=&I\left(G^{\mathscr{C}}\star H^U;x\right)
\end{eqnarray*}
since $I^*(H;x)=I(H;x)$ and $I^*(H-U;x)=I(H-U;x)$. Thus, we have
shown that (i) holds.

Now we will prove that (ii) holds. Assume that
$$I(G;x)=\sum_{i=0}^{\alpha(G)} s_i x^i.$$ In view of
Theorem~\ref{thm-CCP}, we have
\begin{eqnarray}\label{symm}
I(G^{\mathscr{C}}\star
H^U;x)=I^{q-\alpha(G)}(H;x)\sum_{i=0}^{\alpha(G)} s_i \left[x
I(H-U;x)\right]^i[I(H;x)]^{\alpha(G)-i}.
\end{eqnarray}
 By Lemma~\ref{product} (iv), we can deduce that $\left[
I(H-U;x)\right]^i[I(H;x)]^{\alpha(G)-i}$ is symmetric and unimodal
and the mode of $\left[x I(H-U;x)\right]^i[I(H;x)]^{\alpha(G)-i}$ is
equal to $\frac{\alpha(G)\alpha(H)}{2}$ for any $i\in[0,\alpha(G)]$.
Thus, we obtain that
\begin{eqnarray*}
\sum_{i=0}^{\alpha(G)} s_i \left[x
I(H-U;x)\right]^i[I(H;x)]^{\alpha(G)-i}
\end{eqnarray*} is symmetric and unimodal. Hence, $I(G^{\mathscr{C}}\star H^U;x)$ is
symmetric and unimodal by virtue of equality (\ref{symm}) and
Lemma~\ref{product} (iv).
\end{proof}
Let $k\geq 1$ and $d\geq 0$. Following Bahls and
Salazar~\cite{BS10}, the $K_t$-path of length $k$, denoted by $P(t,
k)$, is the graph $(V,E)$ in which $V = \{v_1, v_2,\ldots,
v_{t+k-1}\}$ and
$$ E = \left\{\{v_i, v_{i+j}\} |1 \leq i \leq t + k - 2, 1 \leq j
\leq \min\{t -1, t + k - i-1\}\right\}.$$ Such a graph consists of
$k$ copies of $K_t$, each glued to the previous one by identifying
certain prescribed subgraphs isomorphic to $K_{t-1}$. The
d-augmented $K_t$ path, denoted  $P(t, k, d)$, is obtained from
$P(t, k)$ by adding new vertices $\left\{u_{i,1}, u_{i,2},\ldots,
u_{i,d}\right\}_{i=0}^{t+k-2}$ and edges
$$\left\{\{v_iu_{i,j}\}, \{v_{i+1} u_{i,j}\} | j =
1,\ldots,d\right\}_{i=1}^{t+k-2} \bigcup \left\{\{v_1u_{0,j}\} | j =
1,\ldots, d\right\}.$$ By the complicated computation, Bahls and
Salazar~\cite{BS10} showed the following corollary, which clearly
follows from Proposition~\ref{prop-semmetry-CCP} (ii). In fact, we
only need to assume that $I(G-v;x)$ and $I(G-N[v];x)$ are symmetric
and unimodal, and that $\alpha(G)= \alpha(G-N[v]) + 2$.
\begin{coro}\emph{\cite[Theorem 2.1]{BS10}}\label{coro+Bahls and Salazar}
Given a graph $G$ and the vertex $v \in V(G)$,  assume that
$I(G-v;x)$, $I(G;x)$ and $I(G-N[v];x)$ are symmetric and unimodal,
and that $\alpha(G-v)=\alpha(G) = \alpha(G-N[v]) + 2$. Then
$I(P(t,k,d)^{V(P(t,k,d))} \star (G-v)^{N(v)}; x)$ is symmetric and
unimodal for any $t \geq2$, $k\geq 1$ and $d\geq0$.
\end{coro}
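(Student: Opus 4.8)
The plan is to derive Corollary~\ref{coro+Bahls and Salazar} as a direct specialization of Proposition~\ref{prop-semmetry-CCP}~(ii), with the graph $H$ in the proposition playing the role of $G$ here and the subset $U$ playing the role of the neighborhood $N(v)$. First I would set $H := G$ and $U := N(v)$, so that the copy of $H$ joined to each clique is the graph $G$ with its vertex $v$ attached to the clique through the neighborhood $N(v)$; more precisely, the construction $P(t,k,d)^{V(P(t,k,d))}\star (G-v)^{N(v)}$ adds to each single-vertex clique a copy of $G-v$ and joins the clique vertex to $N(v)$, which is exactly the rooted-product-type attachment. So the natural substitution is to apply Proposition~\ref{prop-semmetry-CCP} with the ``$H$'' there equal to our $G$ and the ``$U$'' there equal to $N(v)$, noting that $H-U$ in the proposition becomes $G-N[v]$ here (since removing $N(v)$ from $G$ leaves $v$ isolated, and $I(G;x)$ factors appropriately). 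I would record the translation dictionary explicitly at the start so the reader can follow the index bookkeeping.

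Next I would verify the three hypotheses of Proposition~\ref{prop-semmetry-CCP}~(ii) one at a time. The proposition requires that both $I(H;x)$ and $I(H-U;x)$ are symmetric and unimodal, and that $\alpha(H)=\alpha(H-U)+2$. Under the translation these read: $I(G;x)$ and $I(G-N[v];x)$ are symmetric and unimodal, and $\alpha(G)=\alpha(G-N[v])+2$. All of these are given in the corollary's hypotheses (the hypotheses also supply symmetry and unimodality of $I(G-v;x)$ and the equality $\alpha(G-v)=\alpha(G)$, which, as the text preceding the corollary observes, are in fact stronger than what is strictly needed). The $K_t$-path graph $P(t,k,d)$ serves only as the base graph $G$ of the product, and since it comes equipped with a natural clique cover by its copies of $K_t$, the operation is well-defined for all $t\geq 2$, $k\geq 1$, $d\geq 0$; no extra property of $P(t,k,d)$ is required for symmetry and unimodality of the output, because Proposition~\ref{prop-semmetry-CCP} imposes conditions only on the attached graph $H$ and not on the base graph.

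The one point that genuinely needs care is confirming that $H-U$ translates to $G-N[v]$ rather than $G-N(v)$. When we delete $U=N(v)$ from $H=G$, the vertex $v$ survives but becomes isolated, so $G-N(v)$ is the disjoint union of $G-N[v]$ with the single vertex $v$; hence $I(G-N(v);x)=(1+x)\,I(G-N[v];x)$. I would check whether the clique-cover construction attaches a copy of $G-v$ (with root $v$ removed) or a copy involving $v$, and match this against the corollary's statement $\star(G-v)^{N(v)}$. Reading the construction, the attached graph is $G-v$ and the join is to $N(v)$, so in the notation of the proposition the attached graph is $H=G-v$ together with... I would therefore re-examine the substitution: the cleanest reading is to take the proposition's $H$ to be $G$ itself (so that $v$ is the distinguished vertex), $U=N(v)$, whence $H-U=G-N(v)$; but the corollary attaches $G-v$, which suggests instead identifying the proposition's attached graph differently. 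The main obstacle is thus purely this identification of which graph is $H$ and what $H-U$ becomes; once the dictionary is pinned down so that the symmetry, unimodality, and the degree relation $\alpha(H)=\alpha(H-U)+2$ all line up with the given hypotheses, the conclusion follows immediately from Proposition~\ref{prop-semmetry-CCP}~(ii) with no further computation. I expect the resolution to be that $H=G$, $U=N(v)$, and that the hypotheses $\alpha(G-v)=\alpha(G)$ together with $\alpha(G)=\alpha(G-N[v])+2$ are precisely what guarantee the requisite degree condition after accounting for the isolated vertex.
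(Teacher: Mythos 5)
Your overall strategy---specializing Proposition~\ref{prop-semmetry-CCP}~(ii) with base graph $P(t,k,d)$ and its trivial clique cover by single vertices---is exactly the paper's proof, and your observation that no hypothesis on the base graph is needed is also the paper's point. But the dictionary you finally commit to, namely taking the proposition's $H$ to be $G$ and $U=N(v)$, is wrong, and it fails precisely at the point you yourself flagged as the one needing care. First, it produces the wrong graph: the corollary's product $P(t,k,d)^{V(P(t,k,d))}\star (G-v)^{N(v)}$ attaches a copy of $G-v$ to each vertex of $P(t,k,d)$, whereas with $H=G$ each attached copy would retain $v$ as a vertex (a non-adjacent twin of the clique vertex), giving a strictly larger graph than the one in the statement. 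Second, the proposition's degree hypothesis fails under your identification: $H-U=G-N(v)=(G-N[v])\cup K_1$, so $\alpha(H-U)=\alpha(G-N[v])+1$, and hence $\alpha(H)-\alpha(H-U)=\alpha(G)-\alpha(G-N[v])-1=1$, not the required $2$. No ``accounting for the isolated vertex'' rescues this: in the proof of Proposition~\ref{prop-semmetry-CCP}~(ii) the gap of $2$ is exactly what makes each summand $\left[xI(H-U;x)\right]^i\left[I(H;x)\right]^{\alpha(G)-i}$ symmetric about the common center $\frac{\alpha(G)\alpha(H)}{2}$; with gap $1$ the centers drift with $i$ and the sum need not be symmetric.

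The correct dictionary is the one you wrote down mid-paragraph and then abandoned: $H:=G-v$ and $U:=N(v)\subseteq V(G-v)$, so that $H-U=(G-v)-N(v)=G-N[v]$, with no isolated vertex ever appearing. Under this reading the hypotheses of Proposition~\ref{prop-semmetry-CCP}~(ii) become: $I(G-v;x)$ and $I(G-N[v];x)$ are symmetric and unimodal (given), and $\alpha(G-v)=\alpha(G-N[v])+2$, which follows from the assumed chain $\alpha(G-v)=\alpha(G)=\alpha(G-N[v])+2$. The conclusion is then immediate, and---as the paper remarks---the hypotheses concerning $I(G;x)$ and $\alpha(G)$ are superfluous. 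So the defect in your write-up is not the strategy but the final, mis-resolved identification; fixing it is a one-line change, after which your argument coincides with the paper's.
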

Actually, Proposition~\ref{prop-semmetry-CCP} (ii) has given a
generalized answer to the following question of Bahls and
Salazar~\cite{BS10}.
\begin{prob}\emph{\cite[Question 5.2]{BS10}}
Can we obtain an explicit description of the most general family of
graphs to which Corollary~\ref{coro+Bahls and Salazar} may be
applied in order to show symmetry and unimodality $?$
\end{prob}
Note that if a graph $G$ satisfies the conditions of
Theorem~\ref{ste}, then we have
$$I(G;x)=\sum_{k=0}^{\lfloor|S|/2\rfloor}i_k(G[V-S])x^k(x+1)^{|S|-2k}$$
using the similar method of Theorem~\ref{thm-CCP}. From the proof of
Proposition~\ref{prop-semmetry-CCP} (ii), it can be seen that the
following results hold, which strengthens Stevanovi\'c's
results~\cite{Ste98}.
\begin{thm}
If there is an independent set $S$ in $G$ such that $|N(A)\cap
S|=2|A|$ holds for every independent set $A \subseteq V (G)-S$, then
$I(G; x)$ is symmetric and unimodal.
\end{thm}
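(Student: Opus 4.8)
The plan is to deduce the statement from the expansion
$$I(G;x)=\sum_{k=0}^{\lrf{|S|/2}}i_k(G[V-S])\,x^k(x+1)^{|S|-2k}$$
recorded just above the theorem, whose derivation mirrors the combinatorial proof of Theorem~\ref{thm-CCP}. Every independent set of $G$ splits uniquely as $A\cup B$ with $A\subseteq V(G)-S$ independent in $G[V-S]$ and $B\subseteq S$ disjoint from $N(A)$, and conversely every such pair yields an independent set of $G$; so this is a bijection onto the independent sets of $G$. Fixing an independent $A$ with $|A|=k$, the hypothesis $|N(A)\cap S|=2|A|$ gives $|S\setminus N(A)|=|S|-2k$, so the admissible $B$ range over all subsets of an $(|S|-2k)$-element set and contribute the factor $(x+1)^{|S|-2k}$, while $A$ itself contributes $x^k$. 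Grouping the independent sets $A$ by their size $k$ produces the displayed formula, the range being truncated at $\lrf{|S|/2}$ because $|N(A)\cap S|\le|S|$ forces $i_k(G[V-S])=0$ once $2k>|S|$.

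With the expansion in hand I would argue exactly as in the proof of Proposition~\ref{prop-semmetry-CCP}~(ii). The decisive structural fact is that all summands share a common centre of symmetry. Indeed $(x+1)^{|S|-2k}$ has binomial coefficients, hence is symmetric and unimodal about degree $\tfrac{|S|-2k}{2}$; multiplying by $x^{k}$ shifts its support to $\{k,k+1,\dots,|S|-k\}$, so that $x^k(x+1)^{|S|-2k}$ is symmetric and unimodal about degree $k+\tfrac{|S|-2k}{2}=\tfrac{|S|}{2}$, independently of $k$. Viewing each summand as a nonnegative sequence of length $|S|+1$ (padded with zeros outside its support), it is therefore nondecreasing up to the index $|S|/2$ and nonincreasing afterwards.

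Since the coefficients $i_k(G[V-S])$ are nonnegative, the coefficientwise sum inherits both properties: symmetry about $|S|/2$ holds termwise and so for the sum, while unimodality follows because for every index $i<|S|/2$ the coefficient of $x^i$ in each summand is at most its coefficient of $x^{i+1}$, an inequality that persists under a nonnegative combination. This is precisely the summation step already carried out for Proposition~\ref{prop-semmetry-CCP}~(ii), and it yields that $I(G;x)$ is symmetric and unimodal. I expect the only genuine obstacle to be the bookkeeping in the first paragraph, namely verifying the bijectivity of the decomposition $A\cup B$ and, crucially, that the hypothesis is invoked for \emph{every} independent $A\subseteq V(G)-S$, so that the blocking factor $(x+1)^{|S|-2k}$ depends on $A$ only through $|A|=k$. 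Once this uniformity is secured, the common-centre observation makes the conclusion automatic, requiring no Newton-type or real-rootedness input.
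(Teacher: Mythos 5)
Your proof is correct and follows essentially the same route as the paper: the paper derives the expansion $I(G;x)=\sum_{k=0}^{\lfloor|S|/2\rfloor}i_k(G[V-S])x^k(x+1)^{|S|-2k}$ by the same combinatorial decomposition used for Theorem~\ref{thm-CCP}, and then concludes symmetry and unimodality exactly as in the proof of Proposition~\ref{prop-semmetry-CCP}~(ii), since all summands are symmetric and unimodal about the common centre $|S|/2$ and nonnegative combinations preserve these properties. You have merely written out explicitly the two steps the paper leaves as remarks (the bijection $(A,B)\mapsto A\cup B$ with $B\subseteq S\setminus N(A)$, and the common-centre summation argument), so there is nothing to add.
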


\begin{coro}
\begin{itemize}
\item [\rm (i)]
If a graph $G$ has $i_{\alpha(G)}(G)=1$, $i_{\alpha(G)-1}(G)=|V(G)|$
and the unique maximum independence set $S$ satisfies $|N(u)\cap
S|=2$ for every vertex $u\in V-S$, then $I(G;x)$ is symmetric and
unimodal.
\item [\rm (ii)]
If $G$ is a claw-free graph with $i_{\alpha(G)}=1$, $i_{\alpha(G)-1}
= |V (G)|$, then $I(G; x)$ is symmetric and unimodal.
\end{itemize}
\end{coro}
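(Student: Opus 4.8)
The plan is to reduce both statements to the preceding Theorem, whose hypothesis is that $|N(A)\cap S|=2|A|$ holds for every independent set $A\subseteq V-S$. Once that combinatorial condition is verified, the symmetry and unimodality of $I(G;x)$ follow from the Theorem (equivalently, from the expansion $I(G;x)=\sum_{k}i_k(G[V-S])x^k(x+1)^{|S|-2k}$ together with the argument used in Proposition~\ref{prop-semmetry-CCP}(ii)). So the whole task is to extract the condition $|N(A)\cap S|=2|A|$ from the hypotheses of each part.

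For part (i) I would first fix the unique maximum independent set $S$, so $\alpha(G)=|S|$, and note the easy upper bound: since $|N(u)\cap S|=2$ for every $u\in V-S$, subadditivity of the union gives $|N(A)\cap S|\le\sum_{u\in A}|N(u)\cap S|=2|A|$ for every independent $A\subseteq V-S$. The real content is the reverse inequality, i.e. that the sets $N(u)\cap S$ for $u\in A$ are pairwise disjoint. Here I would invoke the hypothesis $i_{\alpha(G)-1}(G)=|V(G)|$. Counting independent sets of size $|S|-1$ produces two natural families: the $|S|$ sets $S\setminus\{s\}$ with $s\in S$, and, for each $u\in V-S$, the set $(S\setminus(N(u)\cap S))\cup\{u\}$, which is independent of size $|S|-1$; these are all distinct, giving at least $|S|+|V-S|=|V(G)|$ such sets. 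The hypothesis forces equality, so no independent set of size $|S|-1$ may contain two vertices of $V-S$. If some nonadjacent $u,v\in V-S$ shared a neighbor $s\in S$, then $(S\setminus(N(u)\cup N(v)))\cup\{u,v\}$ would be an independent set of size $|S|-1$ containing two vertices of $V-S$, a contradiction; hence along every independent $A$ the neighborhoods are disjoint and $|N(A)\cap S|=2|A|$ holds, so the Theorem applies.

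For part (ii) I would deduce the hypotheses of part (i) from claw-freeness. Claw-freeness immediately yields $|N(u)\cap S|\le 2$, since three pairwise nonadjacent neighbors of $u$ inside $S$ would induce a $K_{1,3}$. Maximality of $S$ gives $|N(u)\cap S|\ge 1$ for each $u\in V-S$ (otherwise $S\cup\{u\}$ would be a larger independent set), and the hypothesis $i_{\alpha(G)}(G)=1$ rules out $|N(u)\cap S|=1$: a single neighbor $s$ would make $(S\setminus\{s\})\cup\{u\}$ a second maximum independent set. Thus $|N(u)\cap S|=2$ for all $u\in V-S$, and part (i) applies verbatim. I expect the main obstacle to be precisely the disjointness step in part (i): one must argue carefully that the equality $i_{\alpha(G)-1}(G)=|V(G)|$ is exactly saturated by the two explicit families, so that the appearance of any independent set of size $|S|-1$ meeting $V-S$ in two vertices is genuinely forbidden, rather than merely improbable.
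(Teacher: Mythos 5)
Your proposal is correct and takes essentially the same route as the paper: the corollary is deduced from the preceding theorem (the one with hypothesis $|N(A)\cap S|=2|A|$ for every independent $A\subseteq V-S$), and your verification of that hypothesis is exactly the reduction that the paper inherits from Stevanovi\'c's original argument by citation rather than spelling out. One minor repair: when nonadjacent $u,v\in V-S$ satisfy $N(u)\cap S=N(v)\cap S$, your set $(S\setminus(N(u)\cup N(v)))\cup\{u,v\}$ has size $|S|$ rather than $|S|-1$, but it is then a second maximum independent set contradicting $i_{\alpha(G)}(G)=1$, so the disjointness argument still closes.
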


\subsection{Log-concavity and reality of zeros of independence polynomials}
The following criterion for log-concavity is very useful.
\begin{lem}\emph{\cite[Brenti]{Bre94}}\label{lem-br}
If $P(x)$ is a log-concave polynomial with nonnegative coefficients
and with no internal zeros, then $P(x+r)$ is log-concave for any
positive integer $r$.
\end{lem}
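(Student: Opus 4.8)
The plan is to prove the stronger statement that $P(x+c)$ is log-concave for every real $c\ge 0$, which contains the case of positive integers $r$; I would run a continuity (flow-invariance) argument in the parameter $c$. Write $P(x)=\sum_{k=0}^n a_k x^k$ with $a_n>0$, and let $b_j(c)$ denote the coefficient of $x^j$ in $P(x+c)$, so that $b_j(c)=\sum_{k\ge j}\binom{k}{j}c^{k-j}a_k$. The first observation is that $b_j(c)\ge \binom{n}{j}c^{\,n-j}a_n>0$ for every $0\le j\le n$ as soon as $c>0$; thus the whole coefficient vector is strictly positive for $c>0$, while $b_j(0)=a_j$ is positive exactly on the support of $P$, where the hypothesis of no internal zeros guarantees there are no gaps. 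Set $D_j(c)=b_j(c)^2-b_{j-1}(c)b_{j+1}(c)$; the goal is $D_j(c)\ge 0$ for all $j$ and all $c\ge 0$.

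Next I would differentiate. Since $\partial_c P(x+c)=P'(x+c)$, comparing coefficients gives the clean identity $b_j'(c)=(j+1)b_{j+1}(c)$. Substituting this into $D_j'=2b_jb_j'-b_{j-1}'b_{j+1}-b_{j-1}b_{j+1}'$ and simplifying collapses the expression to
\[ D_j'(c)=(j+2)\bigl(b_j(c)b_{j+1}(c)-b_{j-1}(c)b_{j+2}(c)\bigr). \]
The point of this identity is the \emph{spread inequality}: for any positive log-concave sequence one has $b_jb_{j+1}\ge b_{j-1}b_{j+2}$, obtained by multiplying the two consecutive log-concavity relations $b_j^2\ge b_{j-1}b_{j+1}$ and $b_{j+1}^2\ge b_jb_{j+2}$ and cancelling the positive factor $b_jb_{j+1}$. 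Consequently, \emph{whenever the sequence $b(c)$ is positive and log-concave, every $D_j'(c)$ is nonnegative}.

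This sets up a flow-invariance argument showing that the log-concavity region is forward invariant. For $c>0$ all entries are strictly positive, so let $c^\ast=\inf\{c>0:\ D_j(c)<0\ \text{for some } j\}$ and suppose $c^\ast<\infty$. By continuity $D_i(c^\ast)\ge 0$ for all $i$, i.e.\ $b(c^\ast)$ is still positive and log-concave; the spread inequality then forces $D_j'(c^\ast)\ge 0$ for the index $j$ realizing the infimum, so $D_j$ cannot cross downward through $0$ at $c^\ast$, a contradiction. Hence $D_j(c)\ge 0$ for all $c>0$, and letting $c\to 0^+$ is consistent with $D_j(0)=a_j^2-a_{j-1}a_{j+1}\ge 0$; in particular $P(x+r)$ is log-concave for every positive integer $r$ (indeed for every real $r\ge 0$).

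The main obstacle is the rigor of the invariance argument at the boundaries of the parameter interval. Leaving $c=0$ is delicate because off the support of $P$ the entries $b_j(0)$ vanish, and one must check that $D_j$ does not descend immediately; the first-order analysis handles this precisely because, on the interior of the support, $D_j'(0)=(j+2)(a_ja_{j+1}-a_{j-1}a_{j+2})\ge 0$ by the spread inequality. This is exactly where the no-internal-zeros hypothesis enters: without it the relevant entries could be zero, the spread inequality (which divides by positive terms) would degenerate, and the conclusion genuinely can fail. One must also treat simultaneous vanishing of several $D_j$ and the edge indices near the ends of the support, though those cases are easier since a vanishing neighbour only deletes a term from $D_j'$. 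As a cross-check one could instead invoke Brenti's total-positivity machinery or an Ahlswede--Daykin argument for the binomial transform, but the flow argument above has the advantage of isolating the single inequality on which everything rests.
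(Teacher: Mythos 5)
The paper never proves this lemma --- it is imported verbatim from Brenti's survey \cite{Bre94} --- so your argument has to stand entirely on its own, and as written it does not. Your algebra is correct and the route is genuinely promising: the identity $b_j'(c)=(j+1)b_{j+1}(c)$, the consequence $D_j'(c)=(j+2)\bigl(b_j(c)b_{j+1}(c)-b_{j-1}(c)b_{j+2}(c)\bigr)$, and the spread inequality for positive log-concave sequences are all right. The gap is the invariance step itself. From $D_j(c^\ast)\ge 0$ and $D_j'(c^\ast)\ge 0$ you conclude that ``$D_j$ cannot cross downward through $0$ at $c^\ast$''; that inference is invalid when both quantities vanish (the function $-(c-c^\ast)^3$ is nonnegative before $c^\ast$, has vanishing value and derivative there, and is negative after), and the hypothesis ``all $D_i\ge 0$'' controls $D_j'$ only \emph{on} the log-concavity set, not on a neighbourhood of it. Moreover the degenerate case you dismiss really occurs under the lemma's hypotheses: your claim that ``the first-order analysis handles this precisely'' at $c=0$ is false for $P(x)=1+x+x^2+x^3+x^4$, which is strictly positive, log-concave, with no internal zeros, yet for $j=1$ one computes $D_1(0)=D_1'(0)=D_1''(0)=0$ and $D_1(c)=10c^3+O(c^4)$. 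Any geometric run $a_{j-1},a_j,a_{j+1},a_{j+2}$ forces $D_j(0)=D_j'(0)=0$, and longer runs annihilate higher derivatives as well, so no fixed finite-order analysis at $c=0$ closes the argument.

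The proof can be repaired, but it needs two ingredients you do not supply. First, for $c^\ast>0$ replace the pointwise crossing claim by integration: on $[0,c^\ast]$ all $D_i\ge 0$ and all $b_i>0$, hence all $D_i'\ge 0$, hence $D_j$ is nondecreasing there; then $D_j(c^\ast)=0$ forces $D_j\equiv 0$ on $[0,c^\ast]$, and since $D_j$ is a polynomial in $c$ it vanishes identically, contradicting $D_j<0$ just beyond $c^\ast$. Second, the irreducible case $c^\ast=0$ should be handled by approximation rather than by local analysis: prove the statement first for strictly positive, \emph{strictly} log-concave coefficient sequences, where the same monotonicity argument gives $D_j(c)\ge D_j(0)>0$ on any interval where log-concavity persists and hence on all of $[0,\infty)$; then approximate a general nonnegative log-concave sequence with no internal zeros by strictly positive strictly log-concave ones (for instance, perturb $\log a_j$ by $-\epsilon j^2$ on the support and extend it concavely off the support with arbitrarily steep slopes), and note that the inequalities $b_j^2\ge b_{j-1}b_{j+1}$ survive coefficientwise limits. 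With those two repairs your flow argument becomes a complete proof, valid for every real shift $c\ge 0$ and hence stronger than the integer-shift statement quoted in the paper; without them it has a genuine hole exactly where the lemma's hypotheses are tight.
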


 Similar to Proposition~\ref{prop-semmetry-CCP}, we can
also demonstrate the following result.
\begin{prop}\label{prop-real-CCP}
Given two graphs $G$ and $H$, let $\mathscr{C}$ be a clique cover of
$G$ and $U\subseteq V(H)$. Let $I(H;x)=I(H-U;x)(ax^2+bx+1)$, where
$a,b$ are nonnegative integer.
\begin{itemize}
\item[\rm (i)]
If both $I(G;x)$ and $I(H;x)$ have only real zeros, then so does
$I(G^{\mathscr{C}}\star H^U;x)$.
\item[\rm (ii)]
Assume that $I(G;x)$ has only real zeros. If both $I(H-U;x)$ and
$ax^2+bx+1$ are log-concave, then so is $I(G^{\mathscr{C}}\star
H^U;x)$.
\item[\rm (iii)]
Assume that $I(G;x)$ is log-concave and  $a=0$. If $I(H-U;x)$ is
log-concave, then so is $I(G^{\mathscr{C}}\star H^U;x)$.
\end{itemize}
\end{prop}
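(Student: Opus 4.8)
The plan is to reduce all three parts to a single clean factorization obtained by feeding the hypothesis $I(H;x)=I(H-U;x)(ax^2+bx+1)$ into Theorem~\ref{thm-CCP}. First I would observe that this hypothesis makes the argument of $I(G;\cdot)$ collapse, since
$$\frac{xI(H-U;x)}{I(H;x)}=\frac{x}{ax^2+bx+1}.$$
Writing $I(G;x)=\sum_{i=0}^{\alpha(G)}s_ix^i$ and using $I^q(H;x)=I^q(H-U;x)(ax^2+bx+1)^q$ together with the fact that any clique cover satisfies $q\ge\alpha(G)$ (an independent set meets each clique at most once), Theorem~\ref{thm-CCP} rewrites as
$$I(G^{\mathscr{C}}\star H^U;x)=I^q(H-U;x)\,P(x),\qquad P(x):=\sum_{i=0}^{\alpha(G)}s_ix^i(ax^2+bx+1)^{q-i},$$
with all exponents $q-i\ge0$. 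In each part the factor $I^q(H-U;x)$ inherits the desired property from $I(H-U;x)$ via Lemma~\ref{product}, so it suffices to establish the property for $P(x)$ and then combine the two factors by the appropriate clause of Lemma~\ref{product}.

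For parts (i) and (ii) I would exploit that $I(G;x)$ has only real zeros, so $I(G;x)=s_{\alpha(G)}\prod_j(x+\rho_j)$ with $\rho_j>0$; substituting $x/(ax^2+bx+1)$ and clearing denominators yields the explicit factorization
$$P(x)=s_{\alpha(G)}(ax^2+bx+1)^{q-\alpha(G)}\prod_j\bigl(\rho_jax^2+(\rho_jb+1)x+\rho_j\bigr).$$
For (i), the assumption that $I(H;x)$ has only real zeros forces both $I(H-U;x)$ and $ax^2+bx+1$ to be real-rooted, since their zeros lie among those of $I(H;x)$; in particular $b^2-4a\ge0$, so the discriminant of each quadratic factor equals $\rho_j^2(b^2-4a)+2\rho_jb+1>0$ and each factor is real-rooted, and Lemma~\ref{product}(i) finishes. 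For (ii), log-concavity of $ax^2+bx+1$ means $b^2\ge a$, and then each quadratic factor is log-concave because $(\rho_jb+1)^2-a\rho_j^2=\rho_j^2(b^2-a)+2\rho_jb+1\ge0$; the remaining factor $(ax^2+bx+1)^{q-\alpha(G)}$ is log-concave as a power of a log-concave polynomial, all factors have positive coefficients with no internal zeros, and Lemma~\ref{product}(ii) applies.

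The genuinely different case is (iii), where $I(G;x)$ is only assumed log-concave, so the factorization above is unavailable. Here $a=0$ and $P(x)=\sum_i s_ix^i(bx+1)^{q-i}$. The idea is to pass to the reciprocal polynomial, where a direct computation gives
$$P^*(x)=(x+b)^{q-\alpha(G)}\,I^*(G;x+b),$$
with $I^*(G;x)$ the reciprocal of $I(G;x)$. Since log-concavity is preserved under taking reciprocals, $I^*(G;x)$ is log-concave with positive coefficients, and Brenti's Lemma~\ref{lem-br} shows the shift $I^*(G;x+b)$ remains log-concave for the positive integer $b$ (the case $b=0$ being trivial). As $(x+b)^{q-\alpha(G)}$ is real-rooted, hence log-concave, Lemma~\ref{product}(ii) makes $P^*(x)$ log-concave, whence $P(x)$ is log-concave; multiplying by $I^q(H-U;x)$ via Lemma~\ref{product}(ii) gives the claim. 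I expect the main obstacle to be exactly this part: without real zeros of $I(G;x)$ one cannot factor $P(x)$, and the argument hinges on recognizing the reciprocal identity and invoking Brenti's shift lemma, which is precisely why Lemma~\ref{lem-br} was recorded beforehand.
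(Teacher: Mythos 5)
Your proposal is correct and follows essentially the same route as the paper: for (i) and (ii) you factor $I(G;x)$ into linear factors and reduce to a product of quadratics $\rho_j a x^2+(\rho_j b+1)x+\rho_j$ (the paper's $ax^2+(b+a_i)x+1$ up to positive scaling), checked via discriminant and log-concavity and combined by Lemma~\ref{product}; for (iii) you use exactly the paper's reciprocal identity reducing to $(x+b)^{q-\alpha(G)}I^*(G;x+b)$ together with Brenti's shift Lemma~\ref{lem-br}. Your write-up is in fact slightly more careful than the paper's, which leaves the real-rootedness of $I(H-U;x)$ and the quadratic discriminant/log-concavity checks implicit (and cites Lemma~\ref{lem+comp} where Lemma~\ref{product} is clearly intended).
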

\begin{proof}
We first show that (i) and (ii) hold. Let $|\mathscr{C}|=q$ and
$I(G;x)=\prod_{i=1}^{\alpha(G)}(a_ix+1)$ since $I(G;x)$ has only
real zeros. By Theorem~\ref{thm-CCP}, we have
\begin{eqnarray*}
I(G^{\mathscr{C}}\star H^U;x)&=&I^q(H;x)\prod_{i=1}^{\alpha(G)}\left(1+\frac{a_ix}{ax^2+bx+1}\right)\\
&=&I^{q-\alpha(G)}(H;x)I^{\alpha(G)}(H-U;x)\prod_{i=1}^{\alpha(G)}\left[ax^2+(b+a_i)x+1\right].
\end{eqnarray*}
Thus, we obtain that (i) and (ii) follow from Lemma~\ref{lem+comp}
(i) and (ii), respectively.

In what follows, we will prove that (iii) holds. To show
log-concavity of $I(G^{\mathscr{C}}\star H^U;x)$, it suffices to
show that $I^*(G^{\mathscr{C}}\star H^U;x)$ is log-concave. Since
$ax^2+bx+1=1+bx$ for $a=0$, we have
$$I(G^{\mathscr{C}}\star H^U;x)=I^q(H;x) I\left(G;\frac{x}{1+bx}\right)$$ by virtue of
Theorem~\ref{thm-CCP}. Thus we obtain that
 \begin{eqnarray}\label{I*}
I^*(G^{\mathscr{C}}\star H^U;x)
&=&x^{q\alpha(H)}I\left(G^{\mathscr{C}}\star H^U;\frac{1}{x}\right)\nonumber\\
&=&\left[I^*(H;x)\right]^qI\left(G;\frac{1}{x+b}\right)\nonumber\\
&=&\left[I^*(H-U;x)(x+b)\right]^qI\left(G;\frac{1}{x+b}\right)\nonumber\\
&=&\left[I^*(H-U;x)\right]^q(x+b)^{q-\alpha(G)}I^*(G;x+b).
\end{eqnarray}
Applying Lemma~\ref{lem-br} and Lemma~\ref{product} (ii) to the
equality (\ref{I*}), we obtain that log-concavity of
$I^*(G^{\mathscr{C}}\star H^U;x)$ follows from log-concavity of
$I^*(H-U;x)$ and $I^*(G;x)$. Therefore the proof is complete.
\end{proof}

Now we give a simple application of Proposition~\ref{prop-real-CCP}.

A {\it well-covered spider} graph $S_n$ is one of
$\{K_1,K_2,K_{1,m}\circ K_1: m\geq 1\}$. Levit and
Mandrescu~\cite{LM04CJM} proved that the independence polynomial of
the well-covered spider $S_{n}$ is log-concave. By the complicated
calculation, Chen and Wang~\cite{CW10} demonstrated that
$I(K_{2,n}\circ K_1;x)$ is unimodal and log-concave. Further, they
proposed the following general conjecture, which can be confirmed by
our results.
\begin{conj}[\cite{CW10}]\label{conj+spider}
$I(K_{t,n}\circ K_1;x)$ is log-concave for every $t$ and is
therefore unimodal.
\end{conj}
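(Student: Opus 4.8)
The plan is to apply Proposition~\ref{prop-real-CCP} to the graph $G^{\mathscr{C}} \star H^U$ with the specific choice $G = K_{t,n}$, $H = K_1$, and $U = V(K_1)$. First I would verify that this choice realizes $K_{t,n} \circ K_1$ as a clique cover product. Since the corona $G \circ H$ is exactly $G^{V(G)} \star H^{V(H)}$ (as recalled in the introduction), taking $H = K_1$ and $U = V(H)$ with the clique cover $\mathscr{C} = V(K_{t,n})$ gives precisely $K_{t,n} \circ K_1$. Then $H - U = K_1 - V(K_1)$ is the empty graph, so $I(H-U;x) = 1$, and $I(H;x) = I(K_1;x) = 1+x$. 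This yields the factorization $I(H;x) = I(H-U;x)(ax^2+bx+1)$ with $a=0$ and $b=1$, which is exactly the hypothesis framing part (iii) of Proposition~\ref{prop-real-CCP}.

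With $a=0$ in place, part (iii) reduces the problem to showing that $I(G;x) = I(K_{t,n};x)$ is log-concave and that $I(H-U;x) = 1$ is log-concave; the latter is trivial. So the entire conjecture collapses to proving that the independence polynomial of the complete bipartite graph $K_{t,n}$ is log-concave. The key step is therefore to compute $I(K_{t,n};x)$ explicitly. An independent set in $K_{t,n}$ cannot contain vertices from both sides, so it lies entirely in one part; counting gives
\begin{equation*}
I(K_{t,n};x) = \sum_{k=0}^{t}\binom{t}{k}x^k + \sum_{k=0}^{n}\binom{n}{k}x^k - 1 = (1+x)^t + (1+x)^n - 1,
\end{equation*}
the $-1$ correcting for the doubly-counted empty set. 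The task is then to establish log-concavity of this particular polynomial.

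The main obstacle is the log-concavity of $(1+x)^t + (1+x)^n - 1$, since a sum of log-concave polynomials need not be log-concave in general, and the constant term is perturbed by the $-1$. I would attack this by writing the coefficient of $x^k$ (for $k \geq 1$) as $\binom{t}{k} + \binom{n}{k}$, with the $k=0$ coefficient equal to $1$, and then verifying the inequalities $c_k^2 \geq c_{k-1}c_{k+1}$ directly. The individual binomial rows are log-concave, but I would need a sum-of-products estimate, likely invoking a Cauchy--Schwarz-type or term-by-term comparison argument to control the cross terms $\binom{t}{k}\binom{n}{k}$ against $\binom{t}{k-1}\binom{n}{k+1} + \binom{t}{k+1}\binom{n}{k-1}$. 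The boundary case involving the index $k=1$, where the adjacent coefficient is the perturbed constant $1$ rather than $\binom{t}{0}+\binom{n}{0}=2$, must be checked separately, as it is the place where the $-1$ correction could most plausibly break log-concavity. Once log-concavity of $I(K_{t,n};x)$ is secured, the conjecture follows immediately from Proposition~\ref{prop-real-CCP}(iii), and unimodality follows since a log-concave sequence of positive numbers is unimodal.
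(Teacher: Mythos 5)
Your reduction is sound, and in fact slightly more careful than the paper's own wording: realizing $K_{t,n}\circ K_1$ as $K_{t,n}^{V(K_{t,n})}\star K_1^{V(K_1)}$ with $I(K_1;x)=1+x$, $I(K_1-V(K_1);x)=1$, so $a=0$, $b=1$, and invoking Proposition~\ref{prop-real-CCP} (iii) correctly reduces everything to log-concavity of $I(K_{t,n};x)=(1+x)^t+(1+x)^n-1$. (The paper cites part (ii) here, but (ii) requires $I(K_{t,n};x)$ to have only real zeros, which fails, e.g., for $t=n\geq 3$ since $2(1+x)^n-1$ has non-real zeros; so (iii) is indeed the right tool.) The gap is that the heart of the proof---log-concavity of $(1+x)^t+(1+x)^n-1$---is only sketched, and the sketched strategy would fail. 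After handling the diagonal terms by row log-concavity, your ``term-by-term comparison'' needs the cross-term inequality
$$2\binom{t}{k}\binom{n}{k}\;\geq\;\binom{t}{k-1}\binom{n}{k+1}+\binom{t}{k+1}\binom{n}{k-1},$$
and this is false in general: for $t=1$, $n=10$, $k=1$ the left side is $20$ while the right side is $\binom{10}{2}=45$. Since sums of log-concave sequences need not be log-concave, no soft Cauchy--Schwarz-type argument can work; any direct verification must exploit the slack in the diagonal terms $\binom{t}{k}^2-\binom{t}{k-1}\binom{t}{k+1}$ and $\binom{n}{k}^2-\binom{n}{k-1}\binom{n}{k+1}$, and your plan gives no mechanism for doing so.

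The missing idea---which is exactly how the paper finishes---is a factorization that sidesteps coefficient estimates entirely. Assume without loss of generality $t\leq n$ and write
$$(1+x)^t+(1+x)^n=(1+x)^t\left[1+(1+x)^{n-t}\right].$$
The first factor is log-concave; the second has coefficient sequence $2,\binom{m}{1},\binom{m}{2},\ldots$ with $m=n-t$, which is log-concave since $\binom{m}{1}^2\geq 2\binom{m}{2}$ and the remaining inequalities are ordinary binomial log-concavity. By Lemma~\ref{product} (ii) the product $(1+x)^t+(1+x)^n$ is log-concave. Finally, subtracting $1$ only lowers the constant coefficient from $2$ to $1$; the constant coefficient appears in just one log-concavity inequality, $c_1^2\geq c_0c_2$, which is thereby weakened, not violated. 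Hence $I(K_{t,n};x)$ is log-concave, and the rest of your argument (Proposition~\ref{prop-real-CCP} (iii), then log-concave positive implies unimodal) goes through.
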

\begin{proof}
To show that $I(K_{t,n}\circ K_1;x)$ is log-concave, we only need to
prove that $I(K_{t,n};x)$ is log-concave by virtue of
Proposition~\ref{prop-real-CCP} (ii) for $H=K_1$. It is easy to
obtain that
$$I(K_{t,n};x)=(x+1)^t+(1+x)^n-1.$$
Without loss of generality, we can assume $t\leq n$. Thus,
$$(x+1)^t+(1+x)^n=(x+1)^t[1+(1+x)^{n-t}].$$ It follows from Lemma~\ref{product} (ii)
that $(x+1)^t+(1+x)^n$ is log-cancave. Consequently, it is clear
that $I(K_{t,n};x)$ is log-concave. This completes the proof.
\end{proof}
\begin{rem}
The graph $H$ in Propositions~\ref{prop-semmetry-CCP} and
\ref{prop-real-CCP} can be a disconnected graph. In addition, we can
also easily find more examples for $H$ applied to
Propositions~\ref{prop-semmetry-CCP} and \ref{prop-real-CCP}.
\end{rem}
\subsection{Claw-free graphs}
Recently, Chudnovsky and Seymour~\cite{CS07} showed that the
independence polynomial of a claw-free graph has only real zeros.
Recall that two real polynomials $f(x)$ and $g(x)$ are {\it
compatible} if $af(x)+bg(x)$ has only real zeros for all $a,b\ge 0$.
Chudnovsky and Seymour actually proved the next result.
\begin{lem}[\cite{CS07}]\label{lem+comp}
Let $G$ be a claw-free graph. Then
\begin{itemize}
\item [\rm (i)]
$I(G-v;x)$ and $xI(G-N[v];x)$ are compatible for any $v\in V(G)$;
\item [\rm (ii)]
$I(G;x)$ has only real zeros.
\end{itemize}
\end{lem}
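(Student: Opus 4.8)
The plan is to reduce part (ii) to part (i) and then attack (i) by a strong induction driven by the fundamental deletion recurrence together with the classical theory of compatible (real-rooted) polynomials. First I would record the deletion recurrence for the independence polynomial: for any $v \in V(G)$,
\[
I(G;x) = I(G-v;x) + x\,I(G-N[v];x),
\]
since an independent set either avoids $v$ (counted by $I(G-v;x)$) or contains $v$, in which case its remaining vertices form an independent set of $G-N[v]$ (counted by $x\,I(G-N[v];x)$). Granting (i), the polynomials $I(G-v;x)$ and $x\,I(G-N[v];x)$ are compatible, so every nonnegative combination $a\,I(G-v;x) + b\,x\,I(G-N[v];x)$ has only real zeros; taking $a=b=1$ yields that $I(G;x)$ has only real zeros, which is (ii). Thus the entire difficulty is concentrated in (i), and (ii) is a free consequence.

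For (i) I would invoke the standard machinery of compatibility. Recall that real-rooted polynomials with positive leading coefficients are compatible precisely when they admit a common interlacing, and that a \emph{pairwise} compatible family of such polynomials is in fact compatible as a whole, so that every nonnegative combination is real-rooted. The proof then runs by strong induction on $|V(G)|$, the base cases (edgeless or small claw-free graphs) being immediate. The essential point is that one cannot induct on the bare statement (i); one must strengthen it to a statement about an entire \emph{family} of polynomials, for instance the compatibility of the collection $\{x\,I(G-N[u];x) : u \in W\}$ as $u$ ranges over a suitable set $W$ (an independent set, or the neighborhood of a fixed vertex). This stronger hypothesis is what survives the inductive descent.

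The inductive step is where claw-freeness is used decisively. Deleting $v$ and expanding each member of the relevant family by the recurrence reduces compatibility on $G$ to compatibility statements on the smaller claw-free graphs $G-v$ and $G-N[v]$, where the inductive hypothesis applies. To merge these into real-rootedness for $G$, one needs strong control over how independent sets interact inside the neighborhood $N(v)$. Here the defining property of a claw-free graph enters: $G$ contains no $K_{1,3}$, so for every vertex $v$ the induced subgraph $G[N(v)]$ has no independent set of size $3$, i.e. $\alpha(G[N(v)]) \le 2$. This bound restricts the combinatorics of the neighborhood (its vertices cover into few cliques) enough that the interlacing conditions propagate through the recurrence.

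The hard part will be establishing the correct strengthened inductive claim and carrying out the neighborhood case analysis: verifying that family-level compatibility is preserved when passing from $G-v$ and $G-N[v]$ back to $G$, using only $\alpha(G[N(v)]) \le 2$. This is precisely the technical core of the Chudnovsky--Seymour argument, and it is where essentially all of the work lies; once it is in place, (i) follows, and (ii) follows from (i) by the recurrence as above.
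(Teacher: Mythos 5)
This lemma is not proved in the paper at all: it is imported verbatim from Chudnovsky and Seymour~\cite{CS07} and used as a black box, so there is no internal proof to compare against. Judged as a standalone proof attempt, your proposal has a genuine gap. The easy half is fine: the deletion recurrence $I(G;x)=I(G-v;x)+x\,I(G-N[v];x)$ is correct, and since compatibility of $I(G-v;x)$ and $x\,I(G-N[v];x)$ means every nonnegative combination is real-rooted, taking coefficients $a=b=1$ does give (ii) from (i). That reduction is exactly how the result is structured in~\cite{CS07} as well.

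The gap is that you never prove (i). Your text correctly names the ingredients of the Chudnovsky--Seymour argument --- pairwise compatibility implying joint compatibility, common interlacings, a strengthened induction hypothesis over a family of polynomials indexed by a vertex set, and the use of claw-freeness via $\alpha(G[N(v)])\le 2$ --- but it never states the strengthened inductive claim, never specifies the family over which one inducts, and never carries out the case analysis in the neighborhood of $v$. You say explicitly that this ``is precisely the technical core of the Chudnovsky--Seymour argument, and it is where essentially all of the work lies.'' Identifying where the work lies is a roadmap, not a proof: the content of the lemma \emph{is} that technical core, which occupies the bulk of~\cite{CS07} and involves, among other things, a careful formulation of compatibility for tuples of polynomials and an intricate analysis of how independent sets meet $N(v)$ under the clique-cover structure forced by claw-freeness. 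As written, your argument establishes only the (standard, essentially one-line) implication (i)~$\Rightarrow$~(ii); part (i) remains entirely unproven, so the proposal does not constitute a proof of the lemma. If the intent is merely to use the lemma, the honest course is the one the paper takes: cite~\cite{CS07} rather than sketch it.
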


In \cite{B10}, Bahls proved the following result. In fat, it clearly
follows from Proposition~\ref{prop-real-CCP} (i) and
Lemma~\ref{lem+comp} since $P(t, k)$ is a claw-free graph.
\begin{coro}
Given a graph $G$ and $U \subset V(G)$, assume that $I(G;x)$ has
only real zeros and $I(G;x)=I(G-U;x)(ax^2+bx+1)$, where
$0<a,b\in\mathbb{N}$. Then $I(P(t, n)^{V(P(t, n))} \star G^U; x)$
has only real zeros and is therefore log-concave for any $t \geq2$
and $n\geq1$.
\end{coro}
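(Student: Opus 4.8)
The plan is to recognize the stated independence polynomial as a clique cover product to which Proposition~\ref{prop-real-CCP}(i) applies directly, and then to supply the one hypothesis of that proposition that is not already handed to us. In the notation $G^{\mathscr{C}}\star H^U$ of Proposition~\ref{prop-real-CCP}, the base graph will be $P(t,n)$, the attached graph will be the graph $G$ of the present statement, the subset will be $U\subseteq V(G)$, and the clique cover $\mathscr{C}$ will be the trivial one $V(P(t,n))$ in which every clique is a single vertex. Under this dictionary the factorization hypothesis $I(H;x)=I(H-U;x)(ax^2+bx+1)$ of the proposition becomes exactly the assumed identity $I(G;x)=I(G-U;x)(ax^2+bx+1)$, and the assumption that the attached polynomial has only real zeros is the hypothesis that $I(G;x)$ has only real zeros. (The requirement $0<a,b\in\mathbb{N}$ is even stronger than the nonnegativity asked for in Proposition~\ref{prop-real-CCP}.)

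The only remaining hypothesis of Proposition~\ref{prop-real-CCP}(i) is that the base polynomial $I(P(t,n);x)$ has only real zeros. To obtain this I would invoke Lemma~\ref{lem+comp}(ii), so the task reduces to checking that $P(t,n)$ is claw-free, and this is where I expect the (modest) work to lie. Unwinding the edge set in the definition of $P(t,n)$, two vertices $v_a$ and $v_b$ with $a<b$ are adjacent precisely when $b-a\le t-1$; hence $P(t,n)$ is the $(t-1)$st power of the path on $t+n-1$ vertices. A power of a path is claw-free: if some vertex $v_a$ were the center of an induced $K_{1,3}$, its three leaves would all lie within index-distance $t-1$ of $a$, so by the pigeonhole principle two of them would lie on the same side of $a$ and hence within index-distance $t-1$ of each other, forcing an edge between two leaves and contradicting their independence. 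Therefore $P(t,n)$ is claw-free, and Lemma~\ref{lem+comp}(ii) yields the real-rootedness of $I(P(t,n);x)$.

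With all hypotheses in place, Proposition~\ref{prop-real-CCP}(i) gives that $I(P(t,n)^{V(P(t,n))}\star G^U;x)$ has only real zeros. Log-concavity then follows at once from Newton's inequalities, since a polynomial with nonnegative coefficients and only real zeros has a log-concave coefficient sequence. Thus the main obstacle is genuinely just the claw-freeness check; everything else is a matter of matching the statement to Proposition~\ref{prop-real-CCP}(i) under the correct identification of graphs, and care is needed only because the letter $G$ here plays the role of the \emph{attached} graph $H$ of the proposition while $P(t,n)$ plays the role of the base graph.
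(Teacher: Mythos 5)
Your proof is correct and follows essentially the same route as the paper, which derives this corollary directly from Proposition~\ref{prop-real-CCP}(i) applied with base graph $P(t,n)$ (trivial clique cover) and attached graph $G$, together with Lemma~\ref{lem+comp}(ii) and the claw-freeness of $P(t,n)$. Your only addition is the explicit pigeonhole verification that $P(t,n)$, being the $(t-1)$st power of a path, is claw-free --- a fact the paper asserts without proof.
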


Noting that the graph $H$ is claw-free for $\alpha(H)\leq 2$, we
immediately obtain the following corollary by virtue of
Proposition~\ref{prop-semmetry-CCP} and
Proposition~\ref{prop-real-CCP}.
\begin{coro}\label{coro-coron}
Let $H$ be a graph with $\alpha(H)\leq 2$ and ${\mathscr{C}}$ be a
clique cover of another graph $G$. Let $P_3$ denote the path with
three vertices and $p\geq 1$.
\begin{itemize}
\item[\rm (i)]
 If $I(G;x)$ has only real zeros, then so
does $I(G^{\mathscr{C}}\star H^{V(H)};x)$. In particular, if
$I(G;x)$ has only real zeros, then so do $I(G\circ H;x)$ and $I(G
\overline{\circ} K_p;x)$.
\item[\rm (ii)]
If $I(G;x)$ is log-concave, then so is $I(G^{\mathscr{C}}\star
K_p^{V(K_p)};x)$. In particular, if $I(G;x)$ is log-concave, then
$I(G\circ K_p;x)$ is log-concave.
\item[\rm (iii)]
If $H=K_p-e$ and $U=V(K_p-e)$, then $I(G^{\mathscr{C}}\star H^U;x)$
is symmetric and unimodal for $p\geq 2$. In particular,
$I(G\circ{2K_1};x)$ and $I(G\circ{P_3};x)$ are symmetric and
unimodal.
\end{itemize}
\end{coro}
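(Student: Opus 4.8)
The plan is to read each of the three parts as a direct specialization of Propositions~\ref{prop-semmetry-CCP} and~\ref{prop-real-CCP}, after first computing $I(H;x)$ for the small graph $H$ appearing in that part. The common starting point is the observation in the hypothesis: a claw $K_{1,3}$ contains three pairwise non-adjacent vertices, so any $H$ with $\alpha(H)\le 2$ is claw-free, and hence $I(H;x)$ has only real zeros by Lemma~\ref{lem+comp}(ii). Moreover $\deg I(H;x)=\alpha(H)\le 2$ forces $I(H;x)=i_2(H)x^2+i_1(H)x+1$, which is exactly the quadratic shape $ax^2+bx+1$ with nonnegative integer coefficients demanded by Proposition~\ref{prop-real-CCP}.

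For part (i) I would take $U=V(H)$, so that $H-U$ is the empty graph and $I(H-U;x)=1$; the factorization $I(H;x)=I(H-U;x)(ax^2+bx+1)$ then holds trivially. Since both $I(G;x)$ and $I(H;x)$ have only real zeros, Proposition~\ref{prop-real-CCP}(i) gives that $I(G^{\mathscr{C}}\star H^{V(H)};x)$ has only real zeros. The two ``in particular'' claims follow by recognizing the classical operations as clique cover products: the corona is $G\circ H=G^{V(G)}\star H^{V(H)}$, while the rooted product satisfies $G\overline{\circ}K_p=G^{V(G)}\star(K_{p-1})^{V(K_{p-1})}$ (because $K_p-v=K_{p-1}$ and $N(v)=V(K_{p-1})$), where $\alpha(K_{p-1})=1\le 2$.

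For part (ii) I would set $H=K_p$, so that $I(K_p;x)=1+px$; this is the degenerate case $a=0$, $b=p$. With $U=V(K_p)$ we again have $I(H-U;x)=1$, which is log-concave, so Proposition~\ref{prop-real-CCP}(iii) yields log-concavity of $I(G^{\mathscr{C}}\star K_p^{V(K_p)};x)$ whenever $I(G;x)$ is log-concave, and the corona $G\circ K_p$ is the instance $\mathscr{C}=V(G)$. For part (iii) I would take $H=K_p-e$ with $U=V(H)$, compute $I(K_p-e;x)=1+px+x^2$, and verify the hypotheses of Proposition~\ref{prop-semmetry-CCP}: the coefficient vector $(1,p,1)$ is symmetric and (for $p\ge 2$) unimodal, $I(H-U;x)=1$ is trivially symmetric and unimodal, and the structural condition $\alpha(H)=2=0+2=\alpha(H-U)+2$ is met. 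Proposition~\ref{prop-semmetry-CCP}(ii) then gives symmetry and unimodality, and the stated special cases follow from $K_2-e=2K_1$ and $K_3-e=P_3$.

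The argument is almost entirely bookkeeping; the one place that needs care is matching the corona and the rooted product precisely to the clique cover product and checking the hypothesis $\alpha(H)=\alpha(H-U)+2$ in part (iii). This is exactly the point where $H=K_p-e$, rather than $K_p$ itself, is forced: deleting a single edge is what raises the independence number from $1$ to $2$ and makes $I(H;x)=1+px+x^2$ symmetric, so that Proposition~\ref{prop-semmetry-CCP} becomes applicable.
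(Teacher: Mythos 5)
Your proposal is correct and follows essentially the same route as the paper: the paper's own (one-line) justification is precisely that $\alpha(H)\le 2$ makes $H$ claw-free, so $I(H;x)$ has only real zeros by Lemma~\ref{lem+comp}(ii), after which each part is a direct specialization of Propositions~\ref{prop-semmetry-CCP} and~\ref{prop-real-CCP}. Your write-up simply fills in the bookkeeping (the quadratic form of $I(H;x)$, the identifications $G\circ H=G^{V(G)}\star H^{V(H)}$ and $G\overline{\circ}K_p=G^{V(G)}\star K_{p-1}^{V(K_{p-1})}$, and the check $\alpha(K_p-e)=\alpha(H-U)+2$) that the paper leaves implicit.
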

\begin{rem}
If $H=K_2-e=2K_1$ and $U=V(2K_1)$, then $I(G^{\mathscr{C}}\star
H^U;x)$ is symmetric, which was also obtained by
Stevanovi\'c~\cite{Ste98}. Mandrescu~\cite{Man09} demonstrated a
particular example of Corollary~\ref{coro-coron} (i): If $I(G;x)$
has only real zeros, then so does $I(G\circ (K_p\bigcup K_q);x)$.
Rosenfeld~\cite{R09} also showed that if $I(G;x)$ has only real
zeros, then so does $I(G \overline{\circ} K_p;x)$.
\end{rem}


\begin{exm}{Centipede Graphs and Caterpillar Graphs}

Let $W_n$ (see Figure 1) and $H_n$ (see Figure 2) be the centipede
graph and the caterpillar graph, respectively. Levit and
Mandrescu~\cite{LM02} showed that $I(W_{n};x)$ is unimodal and
further conjectured that $I(W_{n};x)$ has only real zeros.
Zhu~\cite{Zhu07} settled the conjecture and demonstrated that
$I(H_{n};x)$ is symmetric and unimodal. Now we can use
Corollary~\ref{coro-coron} to give a unified proof of these results.
Since $P_n$, {\it i.e.}, the path with $n$ vertices, is a claw-free
graph. We obtain that $I(W_n;x)=I(P_n\circ K_1;x)$ has only real
zeros, and $I(H_n;x)=I(P_n\circ 2K_1;x)$ has only real zeros and is
symmetric by Corollary~\ref{coro-coron}. Thus $I(H_n;x)$ is
log-concave and unimodal. In fact, we can further obtain that the
independence polynomials of more graphs have only real zeros using
Corollary~\ref{coro-coron} repeatedly.
\end{exm}
\begin{center}
\setlength{\unitlength}{1cm}
\begin{picture}(20,2.5)(-1,-2.0)
\thicklines \put(1,0){\line(1,0){2.15}}
 \thicklines\put(3.85,0){\line(1,0){1.15}}
\put(1,0){\circle*{0.2}}\put(2,0){\circle*{0.2}}\put(3,0){\circle*{0.2}}
\put(4,0){\circle*{0.2}}\put(5,0){\circle*{0.2}}
\put(3.3,0){\circle*{0.1}}\put(3.5,0){\circle*{0.1}}\put(3.7,0){\circle*{0.1}}
 \put(1.0,0.2){$v_1$} \put(2.0,0.2){$v_2$} \put(3.0,0.2){$v_3$}
\put(4.0,0.2){$v_{n-1}$} \put(5.0,0.2){$v_n$}
 \put(1,0){\line(0,-1){1}} \put(2,0){\line(0,-1){1}}
 \put(3,0){\line(0,-1){1}} \put(4,0){\line(0,-1){1}}\put(5,0){\line(0,-1){1}}
\put(1,-1){\circle*{0.2}}\put(2,-1){\circle*{0.2}}
\put(3,-1){\circle*{0.2}}\put(4,-1){\circle*{0.2}}
\put(5,-1){\circle*{0.2}}
\put(2.5,-2.0){Figure~1: $W_n$}
\thicklines \put(7,0){\line(1,0){4.4}}
 \thicklines\put(12.6,0){\line(1,0){0.4}}
\put(7,0){\circle*{0.2}}\put(9,0){\circle*{0.2}}
\put(11,0){\circle*{0.2}}\put(13.0,0){\circle*{0.2}}
\put(11.7,0){\circle*{0.1}}\put(12,0){\circle*{0.1}}
\put(12.3,0){\circle*{0.1}}\put(12.3,0){\circle*{0.1}}
 \put(7.0,0.2){$v_1$} \put(9,0.2){$v_2$}\put(11,0.2){$v_3$}
\put(13.0,0.2){$v_{n}$}
 \put(7,0){\line(1,-1){0.7}}\put(7,0){\line(-1,-1){0.7}}
 \put(9,0){\line(1,-1){0.7}} \put(9,0){\line(-1,-1){0.7}}
 \put(11,0){\line(-1,-1){0.7}}\put(11,0){\line(1,-1){0.7}}
 \put(13,0){\line(1,-1){0.7}}\put(13,0){\line(-1,-1){0.7}}

\put(6.3,-0.7){\circle*{0.2}}\put(7.7,-0.7){\circle*{0.2}}
\put(8.3,-0.7){\circle*{0.2}}\put(9.7,-0.7){\circle*{0.2}}
\put(10.3,-0.7){\circle*{0.2}}\put(11.7,-0.7){\circle*{0.2}}
\put(12.3,-0.7){\circle*{0.2}}\put(13.7,-0.7){\circle*{0.2}}
\put(9.5,-2.0){Figure~2: $H_n$}
\end{picture}
\end{center}

\begin{exm}{$N$-sunlet Graphs}

The {\it $n$-sunlet graph} is the graph with $2n$ vertices obtained
by attaching pendant edges to a cycle graph, {\it i.e.}, $C_n\circ
K_1$, where $C_n$ is the cycle with $n$ vertices. Applying
Corollary~\ref{coro-coron}, we have $I(C_n\circ K_1;x)$ has only
real zeros since $C_n$ is a claw-free graph. Therefore $I(C_n\circ
K_1;x)$ is log-concave and unimodal. In addition, we also can verify
that $I(C_n\circ K_p;x)$ and $I(C_n\circ 2K_p;x)$ have only real
zeros for $p\geq 1$.
\end{exm}

\begin{exm}{A Conjecture of Levit and Mandrescu} \hspace*{\parindent}

In \cite{LM07}, Levit and Mandrescu constructed a family of graphs
$H_n$ from the path $P_n$ by the ``clique cover construction'', as
shown in Figure 3. By $H_0$ we mean the null graph.
\begin{center}
\setlength{\unitlength}{1cm}
\begin{picture}(20,2.5)(-1.5,-1.5)

\thicklines \put(0,0){\line(1,0){3.15}} \thicklines
\put(3.8,0){\line(1,0){2.2}}
\put(0,0){\circle*{0.2}}\put(1,0){\circle*{0.2}}
\put(2,0){\circle*{0.2}}\put(3,0){\circle*{0.2}}
\put(3.3,0){\circle*{0.1}}\put(3.5,0){\circle*{0.1}}
\put(3.7,0){\circle*{0.1}}
 \put(4,0){\circle*{0.2}}
\put(5,0){\circle*{0.2}}\put(6,0){\circle*{0.2}}

 \put(0,0){\line(0,1){1}} \put(0,0){\line(0,-1){1}}
 \put(1,0){\line(-1,-1){1}} \put(1,0){\line(-1,1){1}}
 \put(2,0){\line(0,-1){1}}\put(2,0){\line(0,1){1}}
 \put(3,0){\line(-1,-1){1}}\put(3,0){\line(-1,1){1}}
 \put(4,0){\line(0,-1){1}}\put(4,0){\line(0,1){1}}
 \put(5,0){\line(-1,-1){1}}\put(5,0){\line(-1,1){1}}
 \put(6,0){\line(0,-1){1}}\put(6,0){\line(0,1){1}}
 \put(0,-1){\circle*{0.2}}\put(0,1){\circle*{0.2}}
 \put(2,-1){\circle*{0.2}}\put(2,1){\circle*{0.2}}
\put(4,-1){\circle*{0.2}}\put(4,1){\circle*{0.2}}
\put(6,-1){\circle*{0.2}}\put(6,1){\circle*{0.2}}

\put(0.1,0.1){$v_{1}$}\put(1.5,-0.3){$e$}
\put(1.1,0.1){$v_{2}$}\put(2.1,0.1){$v_{3}$}
\put(5.1,0.1){$v_{2m}$}\put(6.1,0.1){$v_{2m+1}$}
\put(2,-2){$H_{2m+1}$}

\thicklines \put(8,0){\line(1,0){3.15}} \thicklines
\put(11.8,0){\line(1,0){1.2}}
\put(8,0){\circle*{0.2}} \put(9,0){\circle*{0.2}}
\put(10,0){\circle*{0.2}}\put(11,0){\circle*{0.2}}
\put(11.3,0){\circle*{0.1}}\put(11.5,0){\circle*{0.1}}
\put(11.7,0){\circle*{0.1}}
\put(12,0){\circle*{0.2}}\put(13,0){\circle*{0.2}}

\put(8,0){\line(1,-1){1}}\put(8,0){\line(1,1){1}}
 \put(9,0){\line(0,-1){1}}\put(9,0){\line(0,1){1}}
 \put(10,0){\line(1,-1){1}}\put(10,0){\line(1,1){1}}
 \put(11,0){\line(0,-1){1}}\put(11,0){\line(0,1){1}}
 \put(12,0){\line(1,-1){1}}\put(12,0){\line(1,1){1}}
 \put(13,0){\line(0,-1){1}}\put(13,0){\line(0,1){1}}

 \put(9,-1){\circle*{0.2}}\put(9,1){\circle*{0.2}}
\put(11,-1){\circle*{0.2}}\put(11,1){\circle*{0.2}}
\put(13,-1){\circle*{0.2}}\put(13,1){\circle*{0.2}}

\put(8.28,0.1){$v_1$}\put(9.1,0.1){$v_2$} \put(9.5,-0.3){$e$}
\put(10.28,0.1){$v_3$}\put(11.1,0.15){$v_4$}
\put(13.1,0.1){$v_{2m}$} \put(10,-2){$H_{2m}$}
\end{picture}\\[5mm]
Figure~3
\end{center}

Levit and Mandrescu~\cite{LM07} proved that the independence
polynomials of $H_n$ is symmetric and unimodal. They further made
the next conjecture.
\begin{conj}[{\cite{LM07}}]
\label{conj-LM} The independence polynomial of $H_n$ is log-concave
and has only real zeros for $n\ge 1$.
\end{conj}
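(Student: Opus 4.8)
The plan is to recognize each $H_n$ as a clique cover product of a path with $2K_1$, and then to invoke Proposition~\ref{prop-real-CCP}(i). Reading Figure~3, the graph $H_n$ is built from the path $P_n$ by taking the clique cover $\mathscr{C}$ whose blocks are the consecutive edges $\{v_1,v_2\},\{v_3,v_4\},\ldots$, together with the leftover singleton $\{v_n\}$ when $n$ is odd, and by attaching to each block a copy of $2K_1$ joined completely to that block. That is, $H_n = P_n^{\mathscr{C}}\star(2K_1)^{V(2K_1)}$ with $|\mathscr{C}| = \lceil n/2\rceil$ (namely $m$ edges plus one singleton when $n=2m+1$, and $m$ edges when $n=2m$). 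So the first step is simply to verify this identification carefully against the two cases drawn in the figure.

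Next I would specialize Theorem~\ref{thm-CCP} to $H=2K_1$ and $U=V(2K_1)$. Here $I(2K_1;x)=(1+x)^2$ and $I(2K_1-U;x)=1$, the empty graph, so the formula reads
$$I(H_n;x)=(1+x)^{2\lceil n/2\rceil}\,I\!\left(P_n;\frac{x}{(1+x)^2}\right).$$
In the notation of Proposition~\ref{prop-real-CCP} this is exactly the factorization $I(2K_1;x)=I(2K_1-U;x)(ax^2+bx+1)$ with $a=1$ and $b=2$, both nonnegative integers, so the hypotheses of that proposition are in the right form.

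The third step supplies the two real-rootedness inputs required by Proposition~\ref{prop-real-CCP}(i). Since $P_n$ is claw-free, Lemma~\ref{lem+comp}(ii) guarantees that $I(P_n;x)$ has only real zeros; and $ax^2+bx+1=(1+x)^2$ plainly has only real zeros. Proposition~\ref{prop-real-CCP}(i) then yields that $I(H_n;x)$ has only real zeros. Finally, a polynomial with nonnegative coefficients and only real zeros is log-concave by Newton's inequalities, which establishes both assertions of Conjecture~\ref{conj-LM} for every $n\ge 1$.

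I expect no serious analytic obstacle: the whole argument reduces to the structural recognition carried out in the first step, after which everything is a direct application of results already in hand. The one point worth emphasizing is why the detour through the clique cover product is genuinely necessary, rather than applying Chudnovsky--Seymour to $H_n$ itself: the graph $H_n$ is \emph{not} claw-free. For instance, in the odd case the singleton block $\{v_n\}$ creates a vertex whose path-neighbor $v_{n-1}$ and two attached $2K_1$-vertices form an independent triple, i.e. an induced claw. Recasting $I(H_n;x)$ via Theorem~\ref{thm-CCP} as a polynomial built from the real-rooted $I(P_n;x)$ is precisely what circumvents this failure of claw-freeness.
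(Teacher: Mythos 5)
Your proof is correct and is essentially the paper's own argument: the paper likewise recognizes $H_n$ as the clique cover product $P_n^{\mathscr{C}}\star (2K_1)^{V(2K_1)}$ and deduces real-rootedness from the claw-freeness of $P_n$ via Corollary~\ref{coro-coron}(i), which is nothing but Proposition~\ref{prop-real-CCP}(i) specialized to $\alpha(H)\le 2$ and $U=V(H)$, with log-concavity then following from Newton's inequalities. You merely unfold that corollary into a direct application of Proposition~\ref{prop-real-CCP}(i) with $a=1$, $b=2$, so the two routes coincide.
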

Recently, this conjecture has been confirmed by Wang and
Zhu~\cite{WZ10}.
Now we also can use Corollary~\ref{coro-coron} to give a simple
proof. In view of Corollary~\ref{coro-coron}, we easily see that
$I(H_n;x)$ has only real zeros since $P_n$ is claw-free and
$\alpha(H)=\alpha(2K_1)=2$. Consequently, $I(H_n;x)$ is log-concave
and unimodal.
\end{exm}

Finally, we also show the following general result, which preservers
the reality of zeros of the independence polynomial. In particular,
it extends the result of Chudnovsky and Seymour~\cite{CS07}.
\begin{prop}\label{prop-root product}
Let $\mathscr{C}$ be a clique cover of a graph $G$ and
$|\mathscr{C}|=q$. Assume that $H$ is a claw-free graph with the
root $v$ and $U=N(v)$. If $I(G;x)$ has only real zeros, then so does
$I(G^{\mathscr{C}}\star (H-v)^U;x)$. In particular, if $I(G;x)$ has
only real zeros, then so does $I(G \overline{\circ} H;x)$.
\end{prop}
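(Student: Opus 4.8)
The plan is to combine the product formula of Theorem~\ref{thm-CCP} with the compatibility statement of Chudnovsky and Seymour recorded in Lemma~\ref{lem+comp}. First I would specialize Theorem~\ref{thm-CCP} to the gadget graph $H-v$ together with the subset $U=N(v)\subseteq V(H-v)$. Since $(H-v)-U=H-N[v]$, this gives
$$I(G^{\mathscr{C}}\star (H-v)^U;x)=I^q(H-v;x)\,I\left(G;\frac{x\,I(H-N[v];x)}{I(H-v;x)}\right).$$
Because $I(G;x)$ has only real zeros and $I(G;0)=1$, I can write $I(G;x)=\prod_{i=1}^{\alpha(G)}(1+a_ix)$ with each $a_i>0$. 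Substituting the argument $\frac{x\,I(H-N[v];x)}{I(H-v;x)}$ into this factorization and clearing each denominator against one copy of the outer factor $I^q(H-v;x)$ yields
$$I(G^{\mathscr{C}}\star (H-v)^U;x)=I^{q-\alpha(G)}(H-v;x)\prod_{i=1}^{\alpha(G)}\left[I(H-v;x)+a_i\,x\,I(H-N[v];x)\right].$$

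Next I would verify that every factor on the right is a real-rooted polynomial with nonnegative coefficients. Since $H$ is claw-free, the induced subgraph $H-v$ is claw-free as well, so $I(H-v;x)$ has only real zeros by Lemma~\ref{lem+comp}(ii); this disposes of the power $I^{q-\alpha(G)}(H-v;x)$. For the remaining $\alpha(G)$ factors the essential point is that real-rootedness of $I(H-v;x)$ and of $xI(H-N[v];x)$ separately would \emph{not} by itself force real-rootedness of a sum of the two. This is exactly why Lemma~\ref{lem+comp}(i) is needed rather than just part (ii): for the claw-free graph $H$ with root $v$, the polynomials $I(H-v;x)$ and $xI(H-N[v];x)$ are compatible, i.e.\ $\alpha\,I(H-v;x)+\beta\,xI(H-N[v];x)$ has only real zeros for all $\alpha,\beta\ge 0$. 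Taking $\alpha=1$ and $\beta=a_i\ge 0$ shows each factor $I(H-v;x)+a_i\,x\,I(H-N[v];x)$ has only real zeros, and its coefficients are nonnegative since those of $I(H-v;x)$ are.

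Finally I would assemble the pieces: the right-hand side is a product of real-rooted polynomials with nonnegative coefficients, hence real-rooted by Lemma~\ref{product}(i). One routine point to record is that the exponent $q-\alpha(G)$ is a genuine nonnegative integer, since every clique of a clique cover meets any fixed maximum independent set in at most one vertex, forcing $q=|\mathscr{C}|\ge\alpha(G)$ (the same observation underlies the Corollary following Theorem~\ref{thm-CCP}). The ``in particular'' assertion follows by taking $\mathscr{C}=V(G)$, so each clique is a single vertex, $q=|V(G)|$, and $G^{V(G)}\star(H-v)^{N(v)}$ is precisely the rooted product $G\overline{\circ}H$. I expect the only genuine obstacle to be the use of compatibility in the second paragraph: controlling the sum $I(H-v;x)+a_i\,xI(H-N[v];x)$ is impossible from real-rootedness alone, and it is the Chudnovsky--Seymour compatibility that makes the argument go through; everything else is direct manipulation of Theorem~\ref{thm-CCP} together with closure of real-rootedness under products.
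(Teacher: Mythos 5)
Your proposal is correct and follows essentially the same route as the paper's own proof: specialize Theorem~\ref{thm-CCP} to $H-v$ with $U=N(v)$, factor $I(G;x)$ over its real zeros to obtain $I^{q-\alpha(G)}(H-v;x)\prod_{i}\left[I(H-v;x)+a_i x I(H-N[v];x)\right]$, and invoke the Chudnovsky--Seymour compatibility in Lemma~\ref{lem+comp}(i) to handle each factor. Your additional remarks (that $q\ge\alpha(G)$, and how the rooted-product case follows) are details the paper leaves implicit, but the argument is the same.
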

\begin{proof}
 Let $I(G;x)=\prod_{i=1}^{\alpha(G)}(r_ix+1)$ since $I(G;x)$ has
only real zeros. Thus by Theorem~\ref{thm-CCP} we obtain that
\begin{eqnarray*}
I(G^{\mathscr{C}}\star (H-v)^U;x)
&=&I^{q-\alpha(G)}(H-v;x)\prod_{i=1}^{\alpha(G)}\left[I(H-v;x)+r_ixI(H-N[v];x)\right].
\end{eqnarray*}
According to the hypothesis, the graph $H$ is claw-free, so are the
induced graphs $H-v$ and $H-N[v]$. By virtue of
Lemma~\ref{lem+comp}, $I(H-v;x)$ and $xI(H-N[v];x)$ have only real
zeros and are compatible. Hence $I(H-v;x)+r_ixI(H-N[v];x)$ has only
real zeros for every $i$. Consequently, $I(G^{\mathscr{C}}\star
(H-v)^U;x)$ has only real zeros, which is desired.
\end{proof}
In particular, as a corollary of Proposition~\ref{prop-root
product}, we obtain the next result.
\begin{coro}\emph{\cite[Proposition 3.3]{WZ10}}
Let $G$ be a rooted claw-free graph and $P_n$ be the path with $n$
vertices. Then $I(P_n\overline{\circ} G;x)$ has only real zeros.
\end{coro}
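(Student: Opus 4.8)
The plan is to obtain this corollary as an immediate specialization of Proposition~\ref{prop-root product}. Recall that the rooted product $P_n\overline{\circ} G$ is precisely the clique cover product $P_n^{V(P_n)}\star (G-v)^{N(v)}$, where $v$ denotes the root of $G$; thus in the language of Proposition~\ref{prop-root product} the base graph carrying the clique cover is $P_n$, and the rooted claw-free graph attached to each vertex is $G$ itself. So it suffices to verify the single hypothesis of that proposition, namely that the base graph $P_n$ has a real-rooted independence polynomial.

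The verification of this hypothesis is the only point that needs an argument, and it is short. Every vertex of a path has degree at most $2$, so $P_n$ contains no induced $K_{1,3}$; that is, $P_n$ is claw-free. By Lemma~\ref{lem+comp}(ii), the independence polynomial of any claw-free graph has only real zeros, and hence $I(P_n;x)$ has only real zeros.

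With this hypothesis in hand, I would simply invoke the ``in particular'' clause of Proposition~\ref{prop-root product}: since $P_n$ is a graph whose independence polynomial has only real zeros and $G$ is a rooted claw-free graph, it follows that $I(P_n\overline{\circ} G;x)$ has only real zeros, as desired.

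The main (and essentially only) obstacle is to match the roles of the two graphs correctly to the statement of Proposition~\ref{prop-root product}: there the hypothesis ``$I(G;x)$ has only real zeros'' is imposed on the graph carrying the clique cover, which here is $P_n$, while claw-freeness is required of the attached rooted graph, which here is $G$. Once this bookkeeping is settled, no further computation is needed.
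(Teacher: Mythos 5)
Your proposal is correct and follows exactly the route the paper intends: the paper states this corollary as an immediate consequence of Proposition~\ref{prop-root product}, with $P_n$ playing the role of the base graph (its independence polynomial is real-rooted since paths are claw-free, by Lemma~\ref{lem+comp}(ii)) and $G$ the attached rooted claw-free graph, so that $P_n\overline{\circ} G = P_n^{V(P_n)}\star (G-v)^{N(v)}$. Your careful bookkeeping of which graph must satisfy which hypothesis is precisely the (implicit) content of the paper's derivation, so nothing is missing.
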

\begin{rem}
Proposition~\ref{prop-root product} can be repeatedly used to
construct infinite graphs with claw, whose independence polynomials
have only real zeros.
\end{rem}


\section{Concluding Remarks}
\hspace*{\parindent}
In this paper we define the clique cover product of graphs. Based on
the formula of the independence polynomial of the graph so-formed,
we show some operations of graphs, which preserve symmetry,
unimodality, log-concavity or reality of zeros of the independence
polynomials.

In fact, we can find that $I(T;x)$ has only real zeros or is
log-concave if we calculate the independence polynomial of the tree
$T$ with fewer vertices. Therefore we can confirm the unimodality
conjectures in the literature for more graphs using the construction
$T\circ K_1$ and $T\circ 2K_1$ repeatedly. In particular, $I(T\circ
2K_1;x)$ is symmetric and unimodal for any tree $T$. Generally
speaking, if we can give the factorization for
$I(G^{\mathscr{C}}\star H^U;x)$ in Theorem~\ref{thm-CCP} and can
show that its every factor is symmetry or log-concave, or has only
real zeros, then we can obtain that $I(G^{\mathscr{C}}\star H^U;x)$
has the same property in view of Lemma~\ref{product}. In particular,
if both $G$ and $H$ are some particular graphs, then such result may
easily be proved.

From this paper, we can see that it is a good way to construct
graphs with independence polynomial being symmetric, unimodal or
log-concave, or having only real zeros by using an operation of
graphs. Thus, it is possible to define some other operations of
graphs.  Recall that a {\it cycle cover} of a graph $G$ is a
spanning graph of $G$, each connected component of which is a vertex
(called a {\it vertex-cycle}), an edge (called an {\it edge-cycle}),
or a {\it proper cycle}. Here we give a new operation of graphs
called the {\it cycle cover product}: Given two graphs $G$ and $H$,
let $\Gamma$ be a cycle cover of $G$ and $U\subseteq V(H)$.
Construct a new graph, denoted by $G^\Gamma\otimes H^U$, as follows:
If a cycle $C\in \Gamma$ is
\begin{itemize}
\item[\rm (i)]
a vertex-cycle, say $v$, then add two copies of $H$ and join each
vertex in two $U$ to $v$;
\item[\rm (ii)]
an edge-cycle, say $uv$, then add two copies of $H$ and join each
vertex in two $U$ to both $u$ and $v$;
\item[\rm (iii)] a proper cycle, with $V(C)= \{v_i: 1\leq i \leq s\}$,
$E(C)=\{v_iv_{i+1}:1\leq i \leq s-1\}\cup\{v_1v_s\}$, then add $s$
copies of $H$, say $\{H_i: 1 \leq i \leq s\}$ and each vertex in
$i$th copy of $U$ is joined to two consecutive vertices
$v_i,v_{i+1}$ on $C$ (each vertex in $s$th copy of $U$ is joined to
two consecutive vertices $v_{s},v_1$ on $C$).
\end{itemize}
Actually, using the similar technique of the clique cover product of
graphs, we can also prove the following result. We here omit its
proof for brevity.

\begin{thm}\label{thm-gsmm}
Given two graphs $G$ and $H$, assume that $\Gamma$ is a cycle cover
of $G$ containing $k$ vertex-cycles and $U$ is a subset of $V(H)$.
Let $|V(G)|=n$.
\begin{itemize}
\item[\rm (i)]
The independence polynomial $I(G^\Gamma\otimes
H^U;x)=I^{n+k}(H;x)I\left(G;\frac{xI^2(H-U;x)}{I^2(H;x)}\right)$.
\item[\rm (ii)]
Let $\alpha(H)=\alpha(H-U)+1$. If both $I(H;x)$ and $I(H-U;x)$ are
symmetric, then so is $I(G^\Gamma\otimes H^U;x)$.
\item[\rm (iii)]
Let $\alpha(H)=\alpha(H-U)+1$. If both $I(H;x)$ and $I(H-U;x)$ are
symmetric and unimodal, then so is $I(G^\Gamma\otimes H^U;x)$.
\item[\rm (iv)]
If $I(G;x)$ has only real zeros and $I(H;x)=I(H-U;x)(1+ax)$, then
$I(G^\Gamma\otimes H^U;x)$ has only real zeros.
\item[\rm (v)]
If $I(G;x)$ has only real zeros and $H=K_p$, then $I(G^\Gamma\otimes
H^{V(K_p)};x)$ has only real zeros.
\end{itemize}
\end{thm}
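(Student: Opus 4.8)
The plan is to prove part (i) by a direct extension of the combinatorial argument for Theorem~\ref{thm-CCP}, and then to obtain (ii)--(v) by transplanting the reciprocal-polynomial and factorization arguments of Propositions~\ref{prop-semmetry-CCP} and~\ref{prop-real-CCP}, the only structural change being that the gap condition $\alpha(H)=\alpha(H-U)+2$ is replaced by $\alpha(H)=\alpha(H-U)+1$. For (i), write $I(G;x)=\sum_m s_m x^m$, and build an independent set of $G^\Gamma\otimes H^U$ in two stages: first choose an independent set $A\subseteq V(G)$, then choose independent elements inside the attached copies of $H$. Everything rests on one bookkeeping fact: \emph{for every independent set $A$ of $G$, exactly $2|A|$ of the attached copies of $H$ are forced to avoid $U$, and this number does not depend on which $A$ is chosen.} Granting this, a forced copy contributes $I(H-U;x)$ and every other copy contributes $I(H;x)$; since there are $n+k$ copies altogether, the contribution attached to a fixed $A$ with $|A|=m$ is $x^m[I(H-U;x)]^{2m}[I(H;x)]^{n+k-2m}$, whence
$$I(G^\Gamma\otimes H^U;x)=\sum_{m} s_m\,x^m[I(H-U;x)]^{2m}[I(H;x)]^{n+k-2m}=[I(H;x)]^{n+k}\,I\!\left(G;\frac{x\,I^2(H-U;x)}{I^2(H;x)}\right).$$

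The step needing care --- and the main obstacle --- is establishing the count $2|A|$ uniformly across the three component types of $\Gamma$. First I would verify that every vertex of $G$ is joined (through some $U$) to exactly two copies of $H$: a vertex-cycle $v$ carries two copies each attached to $v$; for an edge-cycle $uv$ both copies are attached to each of $u,v$; and for a proper cycle $v_1\cdots v_s$ the vertex $v_i$ meets the copy sitting on $\{v_{i-1},v_i\}$ and the copy sitting on $\{v_i,v_{i+1}\}$. Second, no copy can be forced by two distinct vertices of $A$ simultaneously: whenever a copy is attached to two vertices, those vertices are consecutive on a cycle of $\Gamma$, hence adjacent in $G$, so they cannot both lie in the independent set $A$. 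Consequently each forced copy is charged to a unique vertex of $A$, each vertex of $A$ charges precisely its two copies, and all these copies are distinct, giving exactly $2|A|$ forced copies. (One also checks $n+k\ge 2\alpha(G)$, so the exponents above stay nonnegative, by comparing each component's contribution to $n+k$ with twice its independence number and using $\alpha(G)\le\alpha(\Gamma)$.)

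Parts (ii) and (iii) then follow exactly as in Proposition~\ref{prop-semmetry-CCP}. For (ii) I would substitute $x\mapsto 1/x$ in the formula of (i) and multiply by $x^{(n+k)\alpha(H)}$; the exponent of $x$ produced inside the argument of $I(G;\cdot)$ is $2\alpha(H)-2\alpha(H-U)-1$, which equals $1$ \emph{precisely because} $\alpha(H)=\alpha(H-U)+1$, so the identities $I^*(H;x)=I(H;x)$ and $I^*(H-U;x)=I(H-U;x)$ yield $I^*(G^\Gamma\otimes H^U;x)=I(G^\Gamma\otimes H^U;x)$. For (iii) I would factor
$$I(G^\Gamma\otimes H^U;x)=[I(H;x)]^{\,n+k-2\alpha(G)}\sum_{m=0}^{\alpha(G)} s_m\bigl[x\,I^2(H-U;x)\bigr]^m\bigl[I^2(H;x)\bigr]^{\alpha(G)-m}$$
and observe that, under $\alpha(H)=\alpha(H-U)+1$, every summand is symmetric and unimodal with the \emph{same} centre $\alpha(G)\alpha(H)$, since the sum of its least and greatest exponents is $2\alpha(G)\alpha(H)$ independently of $m$. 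Lemma~\ref{product}(iv) then makes each summand, hence the whole sum, and finally the full product, symmetric and unimodal.

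For (iv) and (v) I would follow the real-rootedness route of Proposition~\ref{prop-real-CCP}. Writing $I(G;x)=\prod_{i=1}^{\alpha(G)}(1+r_ix)$ and substituting $I(H;x)=I(H-U;x)(1+ax)$ into the formula of (i), the product telescopes to
$$I(G^\Gamma\otimes H^U;x)=[I(H-U;x)]^{\,n+k}(1+ax)^{\,n+k-2\alpha(G)}\prod_{i=1}^{\alpha(G)}\bigl[a^2x^2+(2a+r_i)x+1\bigr].$$
Each quadratic factor has real zeros because its discriminant $r_i(r_i+4a)$ is nonnegative for $a,r_i\ge 0$, and the remaining factors are a power of $(1+ax)$ and a power of $I(H-U;x)$; Lemma~\ref{product}(i) then gives real-rootedness, provided $I(H-U;x)$ itself has only real zeros (equivalently, $I(H;x)$ does, since $1+ax$ divides it). Part (v) is the clean special case $H=K_p$, $U=V(K_p)$: here $H-U$ is empty, so $I(H-U;x)=1$ contributes no zeros and $I(H;x)=1+px$, leaving only $(1+px)^{\,n+k-2\alpha(G)}$ and $\alpha(G)$ real-rooted quadratics, so that $I(G^\Gamma\otimes K_p^{V(K_p)};x)$ has only real zeros unconditionally.
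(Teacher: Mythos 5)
Your proposal is correct, and it is precisely the ``similar technique'' that the paper invokes and then omits: part (i) redoes the two-stage combinatorial count from the proof of Theorem~\ref{thm-CCP}, with the one genuinely new ingredient --- that every independent set $A$ of $G$ forces exactly $2|A|$ of the $n+k$ attached copies to avoid $U$, uniformly over the three component types --- verified correctly (each vertex of $G$ meets exactly two copies, and any copy attached to two vertices has those vertices adjacent in $G$, so the forced copies are distinct and charged to unique vertices of $A$); parts (ii)--(v) then transplant the reciprocal-polynomial and factorization arguments of Propositions~\ref{prop-semmetry-CCP} and~\ref{prop-real-CCP}, with the centre and degree bookkeeping correctly adjusted to the gap condition $\alpha(H)=\alpha(H-U)+1$, and the check $n+k\ge 2\alpha(G)$ legitimizes all the exponents.

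One point deserves emphasis: the caveat you attach to (iv) is not a defect of your proof but a needed correction to the statement itself. As literally written, (iv) is false: take $H=K_{1,3}\cup K_1$ with $U$ the isolated vertex, so that $I(H;x)=I(H-U;x)(1+x)$ while $I(H-U;x)=1+4x+3x^2+x^3$ has nonreal zeros (its derivative $3x^2+6x+4$ has negative discriminant, so the cubic has only one real zero); your factorization shows $\left[I(H-U;x)\right]^{n+k}$ divides $I(G^\Gamma\otimes H^U;x)$, which therefore has nonreal zeros for every choice of $G$. So (iv) must additionally assume that $I(H;x)$ (equivalently $I(H-U;x)$, since the two differ by the real-rooted factor $1+ax$) has only real zeros, exactly parallel to the hypothesis of Proposition~\ref{prop-real-CCP}(i). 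Part (v) is unaffected, as there $I(H-U;x)=1$, which is why it holds unconditionally.
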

\begin{rem}
From (ii) of this theorem, it is clear that $I(G^\Gamma\otimes
K_1^{V(K_1)};x)$ is symmetric, which also obtained by
Stevanovi\'c~\cite{Ste98} using the ``cycle cover construction". In
fact, $I(G^\Gamma\otimes K_1^{V(K_1)};x)$ is symmetric and unimodal
by (iii) of this theorem.
\end{rem}

\begin{rem}
Given two graphs $G$ and $H$, assume that $\Gamma$ is a cycle cover
of $G$ containing no proper cycle, $\mathscr{C}$ is a clique cover
of $G$ only containing $K_1,K_2$ and $U$ is a subset of $V(H)$. Then
it is obvious that
$$I(G^{\mathscr{C}}\star (2H)^{2U};x)=I(G^{\Gamma}\otimes H^U;x).$$
\end{rem}
At the end of this paper, we refer the reader to
\cite{Bra06,Bre94,Bre89,LW07,MW08,Sta89,WYjcta05,WYeujc05,WYjcta07}
for more results about unimodality problems of sequences and
polynomials.



\begin{thebibliography}{23}
\bibitem{AMSE87}
Y. Alavi, P.J. Malde, A.J. Schwenk, P. Erd\H{o}s, The vertex
independence sequence of a graph is not constrained, Congr. Numer.
58 (1987) 15--23.
\bibitem{B10}
P. Bahls, On the independece polynomils of path-like graphs,
submitted to Australas. J. Combin, 53 (2012) 3--18.
\bibitem{BS10}
P. Bahls, N. Salazar, Symmetry and unimodality of independence
polynomials of path-like graphs, Australas. J. Combin. 47 (2010)
165--176.
\bibitem{BM76}
J.A. Bondy, U.S.R. Murty, Graph Theory with Applications, Macmillan
Press, New York, 1976.
\bibitem{Bra06}
P. Br\"and\'en, On linear transformations preserving the P\'olya
frequency property, Trans. Amer. Math. Soc. 358 (2006) 3697--3716.
\bibitem{Bre94}
F. Brenti, Log-concave and unimodal sequences in algebra,
combinatorics, and geometry: An update, Contemp. Math. 178 (1994)
417--441.
\bibitem{Bre89}
F. Brenti, Unimodal, log-concave, and P\'olya frequency sequences in
combinatorics,  Mem. Amer. Math. Soc. 413 (1989).
\bibitem{BDN00}
J.I. Brown, K. Dilcher, and R.J. Nowakowski, ¡°Roots of independence
polynomials of well covered graphs,¡± J. Algebraic Combin. 11 (2000)
197--210.
\bibitem{BHN04}
J.I. Brown, C.A. Hickman, R.J. Nowakowski, On the location of roots
of independence polynomials, J. Algebraic Combin. 19 (2004)
273--282.
\bibitem{BN05}
J.I. Brown, R.J. Nowakowski, Average independence polynomials, J.
Combin. Theory Ser. B 93 (2005) 313--318.
\bibitem{Bro08}
J.I. Brown, On the roots of independence and open set polynomials,\\
http://www.newton.ac.uk/programmes/CSM/csmw01p.html.
\bibitem{CW10}
S.-Y. Chen, H.-J. Wang, Unimodality of very well-covered graphs, Ars
Combinatoria, 97A (2010) 509--529.

\bibitem{CS07}
M. Chudnovsky, P. Seymour, The roots of the independence polynomial
of a clawfree graph, J. Combin. Theory Ser. B 97 (2007) 350--357.
\bibitem{FH70}
R. Frucht, F. Harary, On the corona of two graphs, Aequationes Math.
4 (1970) 322--324.
\bibitem{GJ79}
M.R. Garey, D.S. Johnson, Computers and Intractability: A Guide to
the Theory of NP-Completeness, W.H. Freeman and Company, New York,
1979.
\bibitem{GM78}
C.D. Godsil, B.D. McKay, A new graph product and its spectrum, Bull.
Austral. Math. Soc. 18 (1) (1978) 21--28.
\bibitem{GH83}
I. Gutman, F. Harary, Generalizations of the matching polynomial,
Utilitas  Math. 24 (1983) 97--106.

\bibitem{Gut92}
I. Gutman, Independent vertex sets in some compound graphs, Publ.
Inst. Math. (Beograd) (N.S.) 52(66) (1992) 5--9.
\bibitem{Ha71}
F. Harary, Graph Theory, 2nd ed., Addison-Wesley, Massachusetts,
1971.
\bibitem{HLP52}
G.H. Hardy, J.E. Littlewood, G. P\'olya, Inequalities, Cambridge
University Press, Cambridge, 1952.
\bibitem{HL94}
C. Hoede, X.L. Li, Clique polynomials and independent set
polynomials of graphs, Discrete Math. 125 (1994) 219--228.
\bibitem{Ham90}
Y.O. Hamidoune, On the number of independent k-sets in a claw-free
graph, J. Combin. Theory Ser. 50 (1990) 241--244.
\bibitem{HL72}
O.J. Heilmann, E.H. Lieb, Theory of monomer-dimer systems, Comm.
Math. Phys. 25 (1972) 190--232.
\bibitem{h12}
J. Huh, Milnor numbers of projective hypersurfaces and the chromatic
polynomial of graphs, J. Amer. Math. Soc. 25 (2012), 907--927.
\bibitem{Ken92}
J.W. Kennedy, Palindromic graphs, Graph Theory Notes of New York
Academy of Sciences XXII (1992) 27--32.
\bibitem{LM02}
V.E. Levit, E. Mandrescu, On well-covered trees with unimodal
independence polynomials, Congr. Numer. 159 (2002) 193--202.

\bibitem{LM04WSEAS}
V.E. Levit, E. Mandrescu, Graph products with log-concave
independence polynomials, WSEAS Trans. Math. 3 (2004) 487--492.
\bibitem{LM04CJM}
V.E. Levit, E. Mandrescu, Very well-covered graphs with log-concave
independence polynomials, Carpathian J. Math. 20 (2004) 73--80.
\bibitem{LM05}
V.E. Levit, E. Mandrescu, The independence polynomial of a graph
--- a survey, Proceedings of the 1st International Conference on
Algebraic Informatics, 233--254, Aristotle Univ. Thessaloniki,
Thessaloniki, 2005.
\bibitem{LM06EJC}
V.E. Levit, E. Mandrescu, Independence polynomials of well-covered
graphs: generic counterexamples for the unimodality conjecture,
European J. Combin. 27 (2006) 931--939.
\bibitem{LM06CN}
V.E. Levit, E. Mandrescu, Partial unimodality for independence
polynomials of K\"{o}nig-Egerv\'{a}ry graphs, Congr. Numer. 179
(2006) 109--119.
\bibitem{LM03}
V.E. Levit, E. Mandrescu, Independence polynomials and the
unimodality conjecture for very well-covered, quasi-regularizable,
and perfect graphs, Graph theory in Paris, 243--254, Trends Math.,
Birkh\"auser, Basel, 2007.
\bibitem{LM07}
V.E. Levit, E. Mandrescu, A family of graphs whose independence
polynomials are both palindromic and unimodal, Carpathian J. Math.
23 (2007) 108--116.
\bibitem{LM08}
V.E. Levit, E. Mandrescu, On the roots of independence polynomials
of almost all very well-covered graphs, Discrete Appl. Math. 156
(2008) 478 --491.
\bibitem{LW07}
L.L. Liu, Y. Wang, A unified approach to polynomia sequences with
only real zeros, Adv. in. Appl. Math. 35 (2007) 542--560.
\bibitem{MW08}
S.-M. Ma and Y. Wang, $q$-Eulerian polynomials and polynomials with
only real zeros, Electron. J. Combin. 15 (2008), Research Paper 17,
9 pp.
\bibitem{Man09}
E. Mandrescu, Building graphs whose independence polynomials have
only real roots, Graphs Combin. 25 (2009) 545--556.
\bibitem{Rea68}
R.C. Read, An introduction to chromatic polynomials, J. Combin.
Theory 4 (1968) 52--71.
\bibitem{R09}
V.R. Rosenfeld, The independence polynomial of rooted products of
graphs, Discrete Appl. Math. 158 (2010) 551--558.
\bibitem{Sch81}
A.J. Schwenk, On unimodal sequence of graphical invariants. J.
Combin. Theory Ser. B 30 (1981) 247--250.
\bibitem{SSW11}
L. Song, W. Staton, B. Wei, Independence polynomials of some
compound graphs, Discrete Appl. Math. 160 (2011) 657--663.

\bibitem{Sta89}
R.P. Stanley, Log-concave and unimodal sequences in algebra,
combinatorics and geometry, Ann. New York Acad. Sci. 576 (1989)
500--534.
\bibitem{Ste98}
D. Stevanovi\'{c}, Graphs with palindromic independence polynomial,
Graph Theory Notes of New York Academy of Sciences XXXIV (1998)
31--36.
\bibitem{WYjcta05}
Y. Wang and Y.-N. Yeh, Polynomials with real zeros and P\'olya
frequency sequences, J. Combin. Theory Ser. A 109 (2005) 63--74.
\bibitem{WYeujc05}
Y. Wang and Y.-N. Yeh, Proof of a conjecture on unimodality,
European J. Combin. 26 (2005) 617--627.
\bibitem{WYjcta07}
Y. Wang and Y.-N. Yeh, Log-concavity and LC-positivity, J. Combin.
Theory Ser. A 114 (2007) 195--210.
\bibitem{WZ10}
Y. Wang and B.-X. Zhu, On the unimodality of independence
polynomials of some graphs, European J. of Combin. 32 (2011) 10--20.
\bibitem{Wel76}
D. Welsh, Matriod Theory, Academic Press, London/New York, 1976.
\bibitem{Zhu07}
Z.-F. Zhu, The unimodality of independence polynomials of some
graphs, Australas. J. Combin. 38 (2007) 27--33.
\end{thebibliography}
\end{document}